\newcommand{\Ocal}{\mathcal{O}}
\newcommand{\Gcal}{\mathcal{G}}
\newcommand{\Tcal}{\mathcal{T}}
\newcommand{\Scal}{\mathcal{S}}
\newcommand{\Rbb}{\mathbb{R}}
\newcommand{\Nbb}{\mathbb{N}}
\DeclareMathOperator*{\argmin}{\text{argmin}}
\newtheorem{assumption}{Assumption}
\newtheorem{definition}{Definition}
\newtheorem{lemma}{Lemma}
\newtheorem{theorem}{Theorem}
\newtheorem{remark}{Remark}
\newtheorem{proposition}{Proposition}
\algrenewcommand\algorithmicrequire{\textbf{Input:}}
\algrenewcommand\algorithmicensure{\textbf{Output:}}
\newcommand{\utt}{u_{\theta(t)}}
\newcommand{\ut}{u_{\theta}}
\newcommand{\barOmega}{\bar{\Omega}}
\newcommand{\barTheta}{\bar{\Theta}}
\newcommand{\barvarepsilon}{\bar{\varepsilon}}
\newcommand{\fvt}{f(\cdot,v_t,\nabla v_t)}
\newcommand{\fut}{f(\cdot,u_t,\nabla u_t)}
\definecolor{brickred}{rgb}{0.8, 0.25, 0.33}
\title{Approximation of Solution Operators for High-dimensional PDEs}
\author{Nathan Gaby \and Xiaojing Ye \footnote{Department of Mathematics and Statistics, Georgia State University, Atlanta, GA 30303, USA. Email: \url{{ngaby1,xye}@gsu.edu}.}}
\date{}
\begin{document}
\maketitle

\begin{abstract}
     We propose a finite-dimensional control-based method to approximate solution operators for evolutional partial differential equations (PDEs), particularly in high-dimensions. By employing a general reduced-order model, such as a deep neural network, we connect the evolution of the model parameters with trajectories in a corresponding function space. Using the computational technique of neural ordinary differential equation, we learn the control over the parameter space such that from any initial starting point, the controlled trajectories closely approximate the solutions to the PDE. Approximation accuracy is justified for a general class of second-order nonlinear PDEs. Numerical results are presented for several high-dimensional PDEs, including real-world applications to solving Hamilton-Jacob-Bellman equations. These are demonstrated to show the accuracy and efficiency of the proposed method.
     \medskip

     \noindent
     {\small \textbf{Key words.} Operator Learning; Partial Differential Equations; Deep Neural Networks; Control}

     \smallskip \noindent {\small \textbf{MSC codes.} 65M99, 49M05}
\end{abstract}

\section{Introduction}
\label{sec:intro}
%
%
Partial Differential Equations (PDEs) are important in numerous disciplines from finance, to engineering, to science \cite{evans1998partial,renardy2006introduction,strauss2007partial}. As the solutions of many PDEs lack analytical form, it is necessary to use numerical methods to approximate them \cite{atkinson1989introduction, ames2014numerical,thomas2013numerical,johnson2012numerical,quarteroni2008numerical}. However, the traditional numerical methods, such as finite difference and finite element methods, rely upon the discretization of problem domains, which does not scale to high-dimensional problems due to the well-known issue of ``curse of dimensionality''.

This drawback has led to the more recent interest in using deep neural networks (DNNs) for the solving of PDEs in high-dimensions \cite{raissi2019physics-informed, bao2020numerical,han2017overcoming,e2018deep,cuomo2022scientific,han2018high-pde,han2018deep,kumar2011multilayer}. One of the most notable approaches \cite{raissi2019physics-informed, bao2020numerical,e2018deep,cuomo2022scientific,zang2020weak} is to parameterize the solution of a given PDE as a DNN, and the network parameters are trained to minimize potential violations (in various definitions) to the PDE. 
These methods have enjoyed success in solving a large variety of PDEs empirically. Theoretically, this has been partly justified by the provable universal approximation power of DNNs  \cite{hornik1991approximation, yarotsky2017error, liang2017deep}. 
Nonetheless, most deep-learning based PDE solvers aim to solve specific instances of PDEs, and as a consequence, they need to relearn from scratch whenever the initial and/or boundary value changes even for the same PDE operator. 

To address the aforementioned issue, there has been a collection of recent studies to find \emph{solution operators} of PDEs \cite{li2020fourier,lu2019deeponet,gaby2023neural}. A solution operator of a given PDE is the mapping from the problem's parameters (e.g., PDE coefficient, initial value, or boundary value) to the corresponding solution. Finding solution operators has substantial applications, especially in those where solutions to the same PDE are needed for different initial or boundary value configurations. 
Examples include (decentralized) robotic control problems where robotic agents need to plan optimal trajectories in response to frequently changing terminal cost functions. These agents often have limited computation and battery power and thus they can only adopt low-cost solution operator approximation implemented in their systems.

In this paper, we propose a novel control-based strategy to approximate solution operators of high-dimensional evolution PDEs based on the principle established in \cite{gaby2023neural}, which results in substantially improved solution efficiency and accuracy compared to the latter. 
Specifically, we parameterize the PDE solution using a reduced-order model (e.g., a deep neural network) and learn a control field to steer the model parameters, such that the rendered parameter trajectory induces an approximate solution of the PDE for any initial value. To this end, we adapt the computation technique developed in \cite{chen2018neural} to approximate this control field. 
After learning the control, we can approximate the PDE solution by integrating the control field given any initial condition, which is a significant computational improvement over existing methods \cite{gaby2023neural}. We remark that our new framework is purely based on the given evolution PDE and does not require any spatial discretization nor solutions of the PDE for training. This allows promising applications to high-dimensional PDEs.
We summarize the novelty and contributions of this work as follows:
\begin{enumerate}
    \item We develop a new control-based strategy to approximate the control field in parameter space for solution operator approximation. This strategy substantially improves upon training efficiency and approximation accuracy.
    
    \item We provide new theoretical analysis to establish the existence of neural networks that approximate the solution operator, as well as establish error bounds for the proposed method when solving a general class of nonlinear PDEs compared to \cite{gaby2023neural}.
    
    \item We demonstrate promising results on a variety of nonlinear high-dimensional PDEs, including Hamilton-Jacobi-Bellman equations for stochastic control, to show the performance of the proposed method. 
\end{enumerate}

The remainder of this paper is organized as follows. 
In Section \ref{sec:related}, we provide an overview of recent neural network based numerical methods for solving PDEs. We outline the fundamentals of our proposed approach and provide details of our method and its key characteristics in Section \ref{sec:method}. We show how under mild assumptions there exists a control vector field that achieves arbitrary accuracy and conduct comprehensive error analysis in Section \ref{sec:theory}. We demonstrate the performance of the proposed method on several linear and nonlinear evolution PDEs in Section \ref{sec:numerical-results}. Finally, Section \ref{sec:conclusion} concludes this paper.

\section{Related Work}
\label{sec:related}

\paragraph{Classical and neural-network methods for solving PDEs}
Classical numerical methods for solving PDEs, such as finite difference \cite{thomas2013fdm} and finite element methods \cite{johnson2012numerical}, often rely on discretizing the spatial domain using either a mesh or a triangularization. These methods have been significantly studied in the past decade \cite{ames2014numerical,thomas2013numerical,evans2012numerical,quarteroni2008numerical} and can handle complicated problems such as PDEs on irregular domains. However, they severely suffer the ``curse of dimensionality'' when applied to high-dimensional problems---the number of unknowns increases exponentially fast with respect to the spatial dimension, which renders them computationally intractable for many problems.

Using neural networks as solution surrogates for PDEs can be dated back to many early works \cite{dissanayake1994neural-network-based,lagaris1998artificial,lee1990neural,kumar2011multilayer}.
In recent years, DNNs have emerged as new powerful methods for solving PDEs through various numerical approaches 
\cite{raissi2019physics-informed,e2018deep,bao2020numerical,zang2020weak,sirignano2018dgm:,nusken2021solving,yang2020physics,berg2018unified}.
Of particular interest, DNNs have demonstrated extraordinary potential in solving many high-dimensional nonlinear PDEs, which for classical methods had long been considered computationally intractable. For example, a variety of DNN based methods have been proposed based on the strong form \cite{raissi2019physics-informed,nabian2018deep,dissanayake1994neural-network-based,berg2018unified,magill2018neural,pang2019fpinns,kharazmi2020hp,pang2020npinns,ramabathiran2021spinn}, variational form \cite{e2018deep}, and weak form \cite{zang2020weak,bao2020numerical} of PDEs.
They have been considered with adaptive collocation strategies \cite{anitescu2019artificial}, adversarial inference procedures \cite{yang2019adversarial}, oscillatory solutions \cite{cai2020phase}, and multiscale methods \cite{liu2020multi,wang2020multi,cai2019multi}.
Improvements in these methods have been accomplished by using adaptive activation functions \cite{jagtap2020adaptive}. Further, network structures \cite{gu2020selectnet,gu2020structure,huang2020int}, boundary conditions \cite{lyu2020enforcing,dong2020method}, structure probing \cite{huang2020int}, as well as their convergence \cite{luo2020two,shin2020convergence}, have also been studied. Readers interested in these methods can refer to \cite{ramabathiran2021spinn,yang2020physics,wang2022respecting,liang2022finite,wang2022isl2,zhang2020physics}.
Further, some methods can solve inverse problems such as parameter identification.

For a class of high-dimensional PDEs that have equivalent backward stochastic differential equation (SDE) formulations due to Feynman-Kac formula, have also seen deep learning methods applied by leveraging such correspondences \cite{beck2017machine,fujii2017asymptotic,han2017overcoming,e2017deep,han2018high-pde, han2020solving,pham2021neural,hure2020deep,hutzenthaler2020proof}.
These methods are shown to be good even in high dimensions \cite{han2018high-pde,hure2020deep,pham2021neural}, however, they are limited to solving the special type of evolution equations whose generator functions have a corresponding SDE.

For evolution PDEs, parameter evolution algorithms \cite{du2021evolutional, bruna2022neural-galerkin,anderson2022evolution} have also been considered. These methods generalize on Galerkin methods and parameterize the PDE solution as a neural network \cite{du2021evolutional,bruna2022neural-galerkin} or an adaptively chosen ansatz as discussed in \cite{anderson2022evolution}. In these methods, the parameters are evolved forward in time through a time marching scheme, where at each step a linear system \cite{bruna2022neural-galerkin,du2021evolutional} or a constrained optimization problem \cite{anderson2022evolution} needs to be solved to generate the values of the parameters at the next time step.

\paragraph{Learning solution operator of PDEs}
The previously discussed methods all share a similarity in that they aimed at solving specific instances of a given PDE. Therefore, they need to be rerun from scratch when any part of the problem configuration (e.g., initial value, boundary value, problem domain) changes. In contrast, the solution operator of a PDE can directly map a problem configuration to its corresponding solution. When it comes to learning solution operators, several DNN approaches have been proposed. 

One approach attempts to approximate Green's functions for some linear PDEs \cite{boulle2022learning-green,teng2022green,nicolas2022data-driven,lin2022bi-green}, as solutions to such PDEs have explicit expression based on their Green's functions.
However, this approach only applies to a small class of linear PDEs whose solution can be represented using Green's functions.
Moreover, Green's functions have singularities and special care is needed to approximate them using neural networks.
For example, rational functions are used as activation functions of DNNs to address singularities in \cite{nicolas2022data-driven}. In \cite{boulle2022learning-green}, the singularities are represented with the help of fundamental solutions. 

For more general nonlinear PDEs, DNNs have been used for operator approximation and meta-learning for PDEs
\cite{mao2020deepm,guo2018data-driven,lu2019deeponet,lu2019deepxde,li2020fourier,wen2022u-fno,wang2021learning,regazzoni2019machine}.
For example, the work \cite{guo2018data-driven} considers solving parametric PDEs in low-dimension ($d\le 3$ for the examples in their paper). Their method requires discretization of the PDE system and needs to be supplied by many full-order solutions for different combinations of time discretization points and parameter selections for their network training. Then their method applies proper orthogonal decomposition to these solutions to obtain a set of reduced bases to construct solutions for new problems. This can be seen as using classical solutions to develop a data-driven solution operator.
The work \cite{regazzoni2019machine} requires a massive amount of pairs of ODE/PDE control and the corresponding system outputs, which are produced by solving the original ODE/PDE system; then the DNN is trained on such pairs to learn the mapping between these two subjects which necessarily are discretized as vectors under their proposed framework.

In contrast, DeepONets \cite{lu2019deeponet,lu2019deepxde,wang2021learning} seek to approximate solution mappings by use of a ``branch" and ``trunk" network. FNOs \cite{li2020fourier,wen2022u-fno} use Fourier transforms to map a neural network to a low dimensional space and then back to the solution on a spatial grid. 
In addition, several works apply spatial discretization of the problem or transform the problem domains and use convolutional neural networks (CNNs) \cite{raonic2023convolutional,guo2016cnn-operator,zhu2018bayesian} or graph neural networks (GNNs) \cite{kovachki2023neural,li2020neural,lotzsch2022learning} to create a mapping from initial conditions to solutions. 
Interested readers may also refer to generalizations and extensions of these methods in \cite{chen2020meta,fan2019bcr,li2020neural,cai2020deepm,clark2020deep,mao2020deepm,pang2020npinns,lu2019deepxde,kovachki2021universal}.
A key similarity of all these methods is they require certain domain discretization and often a large number of labeled pairs of IVP initial conditions (or PDE parameters) and the corresponding solution obtained through other methods for training. This limits their applicability for high dimensional problems where such training data is unavailable or the mesh is prohibitive to generate due to the curse of dimensionality. 

The work \cite{gaby2023neural} develops a new framework to approximate the solution operator of a given evolution PDE by parameterizing the solution as a neural network (or any reduced-order model) and learning a vector field that determines the proper evolution of the network's parameters. Therefore, the infinite-dimensional solution operator approximation reduces to finding a vector field in the finite-dimensional parameter space of the network. In \cite{gaby2023neural}, this vector field is obtained by solving a least squares problem using a large number of sampled network parameters in the parameter space.

\paragraph{Differences between our proposed approach and existing ones}
Our approach follows the framework proposed in \cite{gaby2023neural} and thus allows solution operator approximation for high-dimensional PDEs and does not require any spatial discretization nor numerous solution examples of the given PDE for training. These avoid the issues hindering applications of all the other existing methods (e.g., DeepONet \cite{lu2019deeponet} and FNO \cite{li2020fourier}). 

On the other hand, the approach in this work improves both approximation accuracy and efficiency over \cite{gaby2023neural} by leveraging a new training strategy of control vector field in parameter space based on Neural ODE (NODE) \cite{chen2018neural}. In particular, this new approach only samples initial points in the parameter space and generates massive informative samples along the trajectories automatically during training, and the optimal parameter of the control vector field is learned by minimizing the approximation error of the PDE along these trajectories. This avoids both random sampling in a high-dimensional parameter space and solving expensive least squares problems as in \cite{gaby2023neural}. As a result, the new approach demonstrates orders of magnitudes higher accuracy and faster training compared to \cite{gaby2023neural}. Moreover, we develop a new error estimate to handle a more general class of nonlinear PDEs than \cite{gaby2023neural} does. Both the theoretical advancements and numerical improvements will be demonstrated in the present paper.

\section{Proposed Method}
\label{sec:method}

\subsection{Solution operator and its parameterization}
We follow the problem setting in \cite{gaby2023neural} and let $\Omega$ be an open bounded set in $\mathbb{R}^{d}$ and $F$ a \emph{(possibly) nonlinear differential operator} of functions $u: \Omega \to \Rbb$ with necessary regularity conditions, which will be specified below. We consider the IVP of the evolution PDE defined by $F$ with arbitrary initial value as follows:
\begin{equation}
\begin{cases}
\partial_t u(x,t) = F [u](x,t), & \ x \in \Omega,\ t \in (0,T],\\
u(x,0)=g(x), & \ x \in \Omega,
\end{cases}
\label{eq:pde}
\end{equation}
where $T>0$ is some prescribed terminal time, and $g:\Rbb^d \to \Rbb$ stands for an initial value. For ease of presentation, we assume $u$ to be compactly supported in $\Omega$ (for compatibility we henceforth assume $g(x)$ has zero trace on $\partial \Omega$) throughout this paper.
We denote $u^{g}$ the solution to the IVP \eqref{eq:pde} with this initial $g$. 
The solution operator $\Scal_{F}$ of the IVP \eqref{eq:pde} is thus the mapping from the initial $g$ to the solution $u^{g}$ :
\begin{equation}
\label{eq:so}
    \Scal_{F}: C(\bar{\Omega}) \cap C^2(\Omega) \to C^{2,1}(\bar{\Omega}\times[0,T]), \quad \mbox{such that} \quad g \mapsto \Scal_{F}(g) := u^{g}.
\end{equation}
%
%
Our goal is to find a numerical approximation to $\Scal_{F}$. Namely, we want to find \emph{a fast computational scheme $\Scal_{F}$ that takes any initial $g$ as input and accurately estimate $u^{g}$ with low computation complexity.}
Specifically, we expect this scheme $\Scal_{F}$ to satisfy the following properties:
\begin{enumerate}
    \item The scheme applies to PDEs on high-dimensional $\Omega \subset \Rbb^{d}$ with $d\ge 5$.
    \item The computation complexity of the mapping $g \mapsto \Scal_{F}(g)$ is much lower than solving the problem \eqref{eq:pde} directly.
\end{enumerate}

It is important to note that the second item above is due to the substantial difference between solving \eqref{eq:pde} for any given but fixed initial value $g$ and finding the solution operator \eqref{eq:so} that maps any $g$ to the corresponding solution $u^{g}$.
In the literature, most methods belong to the former class, such as finite difference and finite element methods, as well as most state-of-the-art machine-learning based methods.
However, these methods are computationally expensive if \eqref{eq:pde} must be solved with many different initial values, either in parallel or sequentially, and they essentially need to start from scratch for every new $g$.
In a sharp contrast, our method belongs to the second class in order to approximate the solution operator $\Scal_{F}$ which, once found, can help us to compute $u^{g}$ for any given $g$ at much lower computational cost.

To approximate the solution operator $\Scal_{F}$ in \eqref{eq:so}, we follow the strategy devleoped in \cite{gaby2023neural} and develop a new control mechanism in the parameter space $\Theta$ of a prescribed reduced-order model $u_{\theta}$.
%
%
Specifically, we first determine select a model structure $u_{\theta}$ (e.g., a DNN) to represent solutions of the IVP. 
We assume that $u_{\theta}(x) := u(x;\theta)$ is $C^1$ smooth with respect to $\theta$.
This is a mild condition satisfied by many different parameterizations, such as all typical DNNs with smooth activation functions.
%
%
Suppose there exists a trajectory $\{\theta(t): \, 0 \le t \le T\}$ in the parameter space, we only need
%
%
\begin{equation}
\nabla_{\theta}u_{\theta(t)}(x) \cdot \dot{\theta}(t)  = \partial_t (\utt(x)) = F[\utt](x), \qquad \forall\, x \in \Omega,\ t \in (0,T] 
\label{eq:nn-pde}
\end{equation}
and the initial $\theta(0)$ to satisfy $u_{\theta(0)}=g$. The first and second equalities of \eqref{eq:nn-pde} are due to the chain rule and the goal for $u_{\theta(t)}(\cdot)$ to solve the PDE \eqref{eq:pde}, respectively.
Here we use $\nabla_{\theta}$ to denote the partial derivative with respect to $\theta$ (and $\nabla$ is the partial derivative with respect to $x$).
To achieve \eqref{eq:nn-pde}, \cite{gaby2023neural} proposed to learn a DNN $V_{\xi}$ with parameters $\xi$ by solving the following nonlinear least squares problem:
\begin{equation}
    \label{eq:ls-vf}
    \min_{\xi} \int_{\Theta} \int_{\Omega} |\nabla_{\theta} \ut(x) \cdot V_{\xi}(\theta) - F[\ut](x)|^2 \,dx d\theta.
\end{equation}
%
%
%
%
Once $\xi$ is obtained, one can effectively approximate the solution of the IVP with any initial value $g$: first find $\theta(0)$ by fitting $u_{\theta(0)}$ to $g$ (e.g., $\theta(0) = \argmin_{\theta} \|\ut - g\|_{2}^2$, which is fast to compute)
and then numerically integrate $\dot{\theta}(t) = V_{\xi}(\theta(t))$ (which is again fast) in the parameter space $\Theta$.
%
%
The solution trajectory $\{\theta(t):\, 0 \le t \le T\}$ induces a path $\{\utt:\, 0 \le t \le T\}$ as an approximation to the solution of the IVP. 
The total computational cost is substantially lower than solving the IVP \eqref{eq:pde} directly as shown in \cite{gaby2023neural}.


\subsection{Proposed control field approximation}

The approximation strategy developed above is promising \cite{gaby2023neural}, however, the computational cost in solving the nonlinear least squares problem \eqref{eq:ls-vf} is high due to the relatively high dimension $m$ of $\Theta$. Therefore, it requires some intuition and careful tuning in sampling $\theta$ from such $\Theta$ to achieve reasonable solution quality. These issues may hinder the practical applications of this approach.

In this paper, we proposed a new approach to estimate $\xi$ which is substantially more efficient and accurate, even without any intuition, in training $V_{\xi}$ than using \eqref{eq:ls-vf}.
%
%
%
Specifically, we consider $\theta(t)$ as a controllable trajectory and notice that it suffices to find a proper control vector field $V_{\xi}$ with parameters $\xi$ that minimizes the following running cost
\begin{equation}
\label{eq:running_cost}
    r(\theta(t) ; \xi) := \|\nabla_{\theta}\utt \cdot V_{\xi}(\theta)- F[\utt]\|_{L^{2}(\Omega)}^{2}
\end{equation}
for any trajectory $\theta(t)$ in $\Theta$. These trajectories can be unevenly distributed in $\Theta$ for the corresponding $\utt$ to be meaningful solutions to \eqref{eq:pde}, which is the major issue causing inefficient sampling of $\theta$ in \eqref{eq:ls-vf}. Therefore, we propose the following strategy to learn $V_{\xi}$ effectively: first, define the auxiliary variable $\gamma(t):=[\theta(t); s(t)] \in \Rbb^{m+1}$ (we use the MATLAB syntax $[\cdot;\cdot]$ to stack vectors as one column vector) such that $\gamma(0)=[\theta(0);0]$ and
\[
\dot{\gamma}(t)=[\dot{\theta}(t);\ \dot{s}(t)] = 
    [V_{\xi}(\theta(t));\ r(\theta(t); \xi)].
\]
Define $\ell(\gamma; \xi)=[0_{m};1]^{\top}\gamma$, and we see $\ell(\gamma(T);\xi)=\int_0^{T}r(\theta(t);\xi)dt$. We then sample $M$ initial values $\{g_i:\ i=1,\dots,M\}$ and fit them by $\{u_{\theta_i(0)}:\ i=1,\dots,M\}$ correspondingly, then solve the following terminal value control problem with control parameters $\xi$:
\begin{equation}
    \label{eq:many_control_problem}
    \begin{aligned}
        \min_{\xi}\quad &\hat{\ell}(\xi) := \frac{1}{M}\sum_{i=1}^M\ell(\gamma_i(T); \xi),\\
        \text{subject to}\quad & 
        \dot{\gamma}_i(t)= [V_{\xi}(\theta_i(t));\ r(\theta_i(t); \xi)],\quad  \gamma_i(0)=[\theta_i(0);\ 0],\qquad i=1,\ldots,M, 
    \end{aligned}
\end{equation}
where each $\theta_i(t)$ starts from its own initial $\theta_i(0)$ fitted earlier.
In practice, we shall be required to approximate the $L^2$ norm within $r(\theta;\xi)$ in \eqref{eq:running_cost} using Monte Carlo integration as usual. 

With the formulation in \eqref{eq:many_control_problem}, we now show how to calculate gradients of $\hat{\ell}$ using Neural ODE (NODE) \cite{chen2018neural}.
It suffices to consider the case with $M=1$.
NODE allows us to calculate $\nabla_{\xi}\ell (\gamma(T),\xi)$ as follows: using the \textit{adjoint sensitivity method}, we introduce the so-called adjoint parameter $a:[0,T]\to \Rbb^{m+1}$ such that $a(T)=-\nabla_{\gamma}\ell(\gamma(T);\xi)$ and $\dot{a}(t)^{\top}=-a(t)^{\top}\nabla_{\gamma} [V_{\xi}(\theta(t)); r(\theta(t);\xi)].$
%
%
In practice, we compute $[\theta(t); r(\theta(t);\xi)]$ forward in time to $T$, and then compute $a(t)$ and integrate $a(t)^{\top}\nabla_{\xi} [V_{\xi}(\theta(t)); r(\theta(t);\xi)]$ simultaneously backward in time to calculate $\nabla_{\xi}\ell(\gamma(T),\xi) = -\int_T^0 a(t)^{\top} \nabla_{\xi}[V_{\xi}(\theta(t)); r(\theta(t);\xi)]dt$. The latter can be used in any standard first-order optimization algorithm, such as stochastic gradient descient method and ADAM \cite{kingma2015adam:} to minimize $\ell$.
This can be readily extended to the case with $M>1$ and averaged loss $\hat{\ell}$ in \eqref{eq:many_control_problem}.
The training of $V_{\xi}$ is summarized in Algorithm \ref{alg:neural-control}. 
In this work, we apply ADAM to minimize $\hat{\ell}$ and its parameter settings are provided in Section \ref{sec:numerical-results}.


\begin{algorithm}
\caption{Training neural control $V_{\xi}$}
\label{alg:neural-control}
\begin{algorithmic}[1]
\Require{Reduced-order model structure $\ut$ and parameter set $\Theta$. Control vector field structure $V_{\xi}$. Error tolerance ${\varepsilon}$.}
\Ensure{Optimal control parameters ${\xi}$.}
\State Sample $\{\theta_{k}(0)\}_{k=1}^{K}$ uniformly from $\Theta$. 
\State Form loss $\hat{\ell}(\xi)=\frac{1}{M}\sum_{i=1}^M\int_0^{T} \ell(\theta_i(t);\xi)dt$ as in \eqref{eq:many_control_problem}.
\State Minimize $\hat{\ell}(\xi)$ with respect to $\xi$ (using Neural ODE to compute $\nabla_{\xi}\hat{\ell}(\xi)$).
\end{algorithmic}
\end{algorithm}

Once we trained the vector field $V_{\xi}$, we can implement the solution operator $\Scal_{F}$ in the following two steps: we first find a $\theta(0)$ such that $u_{\theta(0)}$ fits $g$, i.e., find $\theta(0)$ that minimizes
$\| u_{\theta} - g \|_2$. This can be done by sampling $\{x_{n}\}_{n=1}^{N}$ from $\Omega$ and minimizing the empirical squared $L^2$ norm $(1/N)\cdot \sum_{n=1}^{N} |\ut(x_n) - g(x_n)|^2$ with respect to $\theta$. Then we solve the ODE 
\begin{equation}
    \label{eq:ode}
    \dot{\theta}(t) = V_{\xi}(\theta(t))
\end{equation}
using any numerical ODE solver (e.g., Euler, 4th order Runge-Kutta, predictor-corrector) with $\theta(0)$ as the initial value. Both steps can be done quickly. We summarize how neural control solves IVPs in Algorithm \ref{alg:solving-ivp}.

\begin{algorithm}[t]
\caption{Implementation of solution operator $\Scal_{F}$ of the IVP \eqref{eq:pde} using trained control $V_{\xi}$}
\label{alg:solving-ivp}
\begin{algorithmic}[1]
\Require{Initial value $g$ and tolerance $\varepsilon_{0}$. Reduced-order model $\ut$ and trained neural control $V_{\xi}$.}
\Ensure{Trajectory $\theta(t)$ such that $u_{\theta(t)}$ approximate the solution $\Scal_{F}[g]$ of the IVP \eqref{eq:pde}.}
\State{Compute initial parameters $\theta(0)$ such that $\|u_{\theta(0)} - g \|_2 \le \varepsilon_{0}$.}
\State{Use any ODE solver to compute $\theta(t)$ by solving \eqref{eq:ode} with approximate field $V_{\xi}$ and initial $\theta(0)$.}

\end{algorithmic}
\end{algorithm}

\section{Theoretical Advances}
\label{sec:theory}

In this section, we provide theoretical justifications on the solution approximation ability of the proposed method for a general class of second-order semi-linear PDEs. For ease of presentation, we first define Sobolev ball with radius $L$ in $W^{k,\infty}(\Omega)$ as follows.
\begin{definition}[Sobolev ball]
    \label{def:sobolev_ball}
    For $L>0$ and $k \in \mathbb{N}$, we define the \text{Sobolev ball} of radius $L$ as the set 
    \[
    SB(\Omega,L,k):=\{g \in W^{k,\infty}(\Omega): \ \ \|g\|_{W^{k,\infty}(\Omega)}\leq L\}.
    \]
\end{definition}
As mentioned in Section \ref{sec:method}, we focus on IVPs with solutions compactly supported in an open bounded set $\Omega \subset \Rbb^d$ for ease of discussion. More specifically, we consider the following IVP with an evolution PDE and arbitrary initial value $g$:
\begin{equation}
\label{eq:pde_g}
    \begin{cases}
    \partial_t u(x,t)=F[u](x,t) & x \in \Omega, \ t \in (0,T)\\
    u(x,0) = g(x) & x \in \Omega,\\
    u(x,t) = 0 & x \in \partial\Omega, \ t \in [0,T]
    \end{cases}
\end{equation}
where $F$ is a $k$th order autonomous, (possibly) nonlinear differential operator. We note that non-autonomous PDEs can be converted to an equivalent autonomous one by augmenting spatial variable $x$ with $t$ in practice. For the remainder of this section, we require the following regularity on $F$.
\begin{assumption}
\label{assump:regularity}
 Let $F$ be a $k$th order differential operator and $p \in \mathbb{N}$. For all $L>0$, there exists $M_{p,L,F}>0$ such that for all $w,v \in  SB(\Omega,L,k+p)$ we have
\[
\|F[w]-F[v]\|_{W^{p,\infty}(\Omega)}\leq M_{p,L,F} \|w-v\|_{W^{k+p,\infty}(\Omega)}
\]
\end{assumption}
The requirements of Assumption \ref{assump:regularity} ensure that $F: W^{k+p,\infty}(\Omega) \to W^{p,\infty}(\Omega)$ is Lipschitz over $SB(\Omega,L,k+p)$ for the specified $p$. For many types of PDEs, this assumption is quite mild. For example, if the second-order linear/nonlinear differential operator $F$ is $C^p$ in $(x,u(x),\nabla u(x), \nabla^2u(x))$ for all $x \in \Omega$, then $F$ satisfies Assumption \ref{assump:regularity} with $k=2$.

\begin{theorem}
\label{thm:theta_exist}
Suppose $F$ satisfies Assumption \ref{assump:regularity} for $k,p \in \Nbb$. Let $\epsilon>0$ and $L>0$. Then there exists a feed-forward neural network $\ut$ and an open bounded set $\Theta_{u,F,L} \subset \Rbb^m$ such that
\begin{enumerate}
    \item[(i)] For every $\theta \in \Theta_{u,F,L}$, there exists $\alpha_{\theta} \in \Rbb^m$ such that
    \[
    \|\alpha_{\theta} \nabla_{\theta}\ut-F[\ut]\|_{W^{1,\infty}(\Omega)}< \epsilon.
    \]
    \item[(ii)] For every $g \in W^{2k+3,\infty}(\Omega) \cap SB(\Omega,L,2k+2)$, there exists $\theta \in \Theta_{u,F,L}$ such that
    \[
    \|\ut - g \|_{W^{1,\infty}(\Omega)}< \epsilon.
    \]
\end{enumerate}
\end{theorem}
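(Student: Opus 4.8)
The plan is to build the network $\ut$ and the parameter set $\Theta_{u,F,L}$ so that the family $\{\ut : \theta \in \Theta_{u,F,L}\}$ is a rich enough approximating class to capture, simultaneously, (a) every admissible initial value $g$ and (b) every function reachable from such a $g$ along the PDE flow, all in the $W^{1,\infty}$ sense, while keeping $\nabla_\theta \ut$ expressive enough that its range contains a good approximation of $F[\ut]$. The natural engine here is a quantitative universal approximation theorem in Sobolev norm: there is a feed-forward network architecture that $\epsilon$-approximates any function in a Sobolev ball $SB(\Omega, L', 2k+3)$ together with its derivatives up to the order needed (here order $k+1$, since we must control $F[\ut]$ in $W^{1,\infty}$ and $F$ is $k$th order, so we need $\ut$ close in $W^{k+1,\infty}$). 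I would fix such an architecture with $m$ parameters and let $\Theta_{u,F,L}$ be the (open, bounded) set of parameter vectors whose corresponding networks land in a slightly enlarged Sobolev ball $SB(\Omega, L'', 2k+2)$ and within $W^{2k+3,\infty}$-distance of the PDE solution family; openness and boundedness come from continuity of $\theta \mapsto \ut$ in the relevant norms and from truncating the parameter domain.

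For part (ii), given $g \in W^{2k+3,\infty}(\Omega)\cap SB(\Omega,L,2k+2)$, I would invoke the Sobolev-norm universal approximation result to get $\theta$ with $\|\ut - g\|_{W^{k+1,\infty}(\Omega)} < \epsilon$, hence a fortiori $\|\ut - g\|_{W^{1,\infty}(\Omega)} < \epsilon$, and I would arrange the construction so that this $\theta$ automatically lies in $\Theta_{u,F,L}$ (this is why $\Theta_{u,F,L}$ is defined as a neighborhood of the approximants of the solution family, and why the higher regularity $2k+3$ and the slightly larger radius enter — so that $g$ and nearby functions stay inside). For part (i), fix $\theta \in \Theta_{u,F,L}$. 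Since $\ut$ is close in $W^{k+1,\infty}$ to some smooth function, $F[\ut]$ is a well-defined element of $W^{1,\infty}(\Omega)$ by Assumption~\ref{assump:regularity} (with $p=1$ and the radius governing $\Theta_{u,F,L}$). Now I need a vector $\alpha_\theta \in \Rbb^m$ with $\|\alpha_\theta \nabla_\theta \ut - F[\ut]\|_{W^{1,\infty}(\Omega)} < \epsilon$. Here $\alpha_\theta \nabla_\theta \ut = \sum_{j=1}^m (\alpha_\theta)_j \partial_{\theta_j}\ut$ is a linear combination of the coordinate parameter-derivatives of the network; the key structural fact to exploit is that for standard feed-forward architectures these derivatives $\partial_{\theta_j}\ut$ span (or, after a mild architectural augmentation, can be made to span) a space that itself has universal approximation power — e.g., differentiating with respect to the output-layer weights already reproduces the hidden-layer activation functions, and differentiating with respect to a final scalar multiplier reproduces $\ut$ itself. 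So I would choose the architecture so that $\mathrm{span}\{\partial_{\theta_j}\ut(\cdot)\}_{j=1}^m$ contains a subfamily dense (in $W^{1,\infty}$) in a space containing $F[\ut]$, then pick $\alpha_\theta$ to realize the $\epsilon$-approximation. A subtlety is that $\nabla_\theta \ut$ depends on $\theta$, so I need this spanning/approximation property to hold \emph{uniformly} over $\theta \in \Theta_{u,F,L}$; compactness of the closure of $\Theta_{u,F,L}$ plus continuity should let me pass from pointwise-in-$\theta$ to uniform, possibly at the cost of enlarging $m$.

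I expect the main obstacle to be part (i): controlling the span of the parameter-gradients $\partial_{\theta_j}\ut$ in a strong ($W^{1,\infty}$) norm, uniformly in $\theta$, and reconciling this with the finite, fixed parameter count $m$ that must also serve part (ii). Concretely, one must design a single architecture whose ordinary function class is rich enough for (ii) and whose \emph{tangent} class $\{\sum_j \alpha_j \partial_{\theta_j}\ut\}$ is rich enough for (i), and then argue both approximations hold with the same $\Theta_{u,F,L}$ and the same $\epsilon$; the cleanest route is probably to append to a base network a linear "reservoir" block of extra neurons whose output weights are a dedicated sub-block of $\theta$, so that $\partial_{\theta_j}\ut$ over that sub-block directly gives a universal dictionary, and to verify that adding this block does not spoil the Sobolev approximation used in (ii). The remaining estimates — propagating $W^{k+1,\infty}$ closeness through the $k$th-order, Lipschitz operator $F$ via Assumption~\ref{assump:regularity}, and checking openness/boundedness of $\Theta_{u,F,L}$ — are routine once the architecture is pinned down.
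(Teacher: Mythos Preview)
Your proposal takes a genuinely different route from the paper. You attack (i) by arguing directly that the tangent space $\mathrm{span}\{\partial_{\theta_j} u_\theta\}$ is itself a rich approximating class (via a ``reservoir'' block whose output weights sit in $\theta$), so that some $\alpha_\theta$ hits $F[u_\theta]$ to within $\epsilon$. The paper never analyzes the tangent space at all. Instead, for each admissible $g$ it forms the space--time function $T_g(x,t) := g(x) + tF[g](x)$ on $\Omega_\delta := \Omega \times [-\delta,\delta]$, shows $\|T_g\|_{W^{k+2,\infty}(\Omega_\delta)}$ is bounded by a constant depending only on $L$ and $F$ (this is where Assumption~\ref{assump:regularity} and the regularity index $2k+2$ on $g$ are used), and invokes Sobolev universal approximation once in $d+1$ variables to get a space--time network $v_\eta(x,t)$ with $\|v_\eta - T_g\|_{W^{k+1,\infty}(\Omega_\delta)}$ small. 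The key observation (Lemma~\ref{lem:evolution_params}) is that any feed-forward network $v_\eta(x,t)$ can be rewritten as $u_{\hat\theta(t)}(x)$ for a smoothly evolving $\hat\theta(t)$ simply by absorbing $w_0 t$ into the first-layer bias. The chain rule then gives $\partial_t v_\eta(\cdot,0) = \nabla_\theta u_\theta \cdot \dot{\hat\theta}(0)$ with $\theta = \hat\theta(0)$, so setting $\alpha_\theta := \dot{\hat\theta}(0)$ yields (i) for free, because $\partial_t v_\eta(\cdot,0) \approx \partial_t T_g(\cdot,0) = F[g] \approx F[v_\eta(\cdot,0)] = F[u_\theta]$. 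Part (ii) falls out of the same estimate since $u_\theta(\cdot) = v_\eta(\cdot,0) \approx T_g(\cdot,0) = g$. The set $\Theta_{u,F,L}$ is then the union of open neighborhoods of these $\hat\theta(0)$'s, intersected with a large ball.

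Your reservoir idea is plausible but carries a circularity you would have to break carefully: for fixed $\theta$ the dictionary $\{\partial_{\theta_j} u_\theta\}_j$ spans a fixed finite-dimensional subspace, so to approximate the moving target $F[u_\theta]$ uniformly over $\Theta_{u,F,L}$ you must first know the compact set $\{F[u_\theta] : \theta \in \bar\Theta_{u,F,L}\}$, then size the reservoir to cover it, then verify that appending the reservoir does not alter the $\Theta_{u,F,L}$ you already used to define that compact set. This can be made to work, but it is substantially more delicate than the paper's single $(d{+}1)$-dimensional approximation of $T_g$, which delivers (i) and (ii) simultaneously with no separate tangent-space analysis. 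What the paper's trick buys is that the needed tangent direction is \emph{manufactured} by the time variable rather than \emph{found} inside an abstract span; what your approach would buy, if carried through, is an architecture-level statement about the expressivity of $\nabla_\theta u_\theta$ that might be reusable elsewhere.
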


The proof of Theorem \ref{thm:theta_exist} can be found in Appendix \ref{appx:A}.
Theorem \ref{thm:theta_exist} tells us that with a sufficiently large neural network, we can find a subset $\Theta_{u,F,L}$ of the parameter space such that the tangent spaces of $\Theta_{u,F,L}$ can approximate $F[u_{\theta}]$ arbitrarily well. This is crucial if our method is going to be able to accurately track the solutions of \eqref{eq:pde} along $t$. With Theorem \ref{thm:theta_exist} we can now state the following result on the existence of vector fields defined over $\Theta_{u,F,L}$ that have the approximation capabilities proved in Theorem \ref{thm:theta_exist}. This proposition is due to \cite{gaby2023neural} which had a form of Theorem \ref{thm:theta_exist} used as an assumption in the proof.
\begin{proposition}
Suppose Assumption \ref{assump:regularity} holds. Then for any $\varepsilon > 0$ and $L>\epsilon>0$, there exists a differentiable vector field parameterized as a neural network $V_{\xi}: \bar{\Theta}_{u,F,L} \to \Rbb^{m}$ with parameters $\xi$, such that 
\[
\|V_{\xi}(\theta) \cdot \nabla_{\theta}\ut-F[\ut]\|_{H^1(\Omega)} \leq \varepsilon,
\]
for all $\theta \in \bar{\Theta}_{u,F,L}$.
\label{prop:F_exists}
\end{proposition}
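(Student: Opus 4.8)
The plan is to derive Proposition \ref{prop:F_exists} from Theorem \ref{thm:theta_exist} by first building a suitable \emph{target} vector field pointwise on $\bar\Theta_{u,F,L}$ and then invoking a universal approximation theorem to realize it (approximately) as a neural network $V_\xi$. Concretely, for each $\theta \in \bar\Theta_{u,F,L}$ Theorem \ref{thm:theta_exist}(i) supplies a vector $\alpha_\theta \in \Rbb^m$ with $\|\alpha_\theta \nabla_\theta\ut - F[\ut]\|_{W^{1,\infty}(\Omega)} < \epsilon$. I would like to set $V^\star(\theta) := \alpha_\theta$, but since $\alpha_\theta$ is only guaranteed to exist (not to depend nicely on $\theta$), the first real step is to produce a \emph{measurable}, or better yet \emph{continuous}, selection $\theta \mapsto V^\star(\theta)$ achieving the bound (with $\epsilon$ slightly relaxed, say to $2\epsilon < \varepsilon/2$ after also absorbing the $W^{1,\infty}\hookrightarrow H^1$ norm comparison on the bounded domain $\Omega$). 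I expect this to be the main obstacle, and I would handle it one of two ways: (a) observe that for fixed $\theta$ the map $\alpha \mapsto \|\alpha\nabla_\theta\ut - F[\ut]\|_{W^{1,\infty}}$ is convex and the sublevel set at level $\epsilon$ is nonempty, closed and convex, so one can take $V^\star(\theta)$ to be the minimum-norm element (the projection of $0$), which is then single-valued and — by continuity of $\theta\mapsto\nabla_\theta\ut$ and $\theta\mapsto F[\ut]$ in the relevant norms, which holds because $\ut$ is $C^1$ in $\theta$ and $F$ is Lipschitz by Assumption \ref{assump:regularity} — depends continuously on $\theta$; or (b) cover the compact set $\bar\Theta_{u,F,L}$ by finitely many balls on which a single constant $\alpha$ works (up to $2\epsilon$), and patch with a smooth partition of unity, using convexity of the sublevel sets to keep the patched field inside the $2\epsilon$-sublevel set at every point.

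Once a continuous (and, if desired, after mollification, $C^\infty$) target field $V^\star: \bar\Theta_{u,F,L}\to\Rbb^m$ is in hand with
\[
\|V^\star(\theta)\cdot\nabla_\theta\ut - F[\ut]\|_{H^1(\Omega)} \le \tfrac{\varepsilon}{2}, \qquad \forall\, \theta\in\bar\Theta_{u,F,L},
\]
the second step is a stability/continuity estimate: the functional $V \mapsto \sup_{\theta}\|V(\theta)\cdot\nabla_\theta\ut - F[\ut]\|_{H^1(\Omega)}$ is Lipschitz in $V$ with respect to the uniform norm on $\bar\Theta_{u,F,L}$, with constant $C := \sup_{\theta\in\bar\Theta_{u,F,L}}\|\nabla_\theta\ut\|_{H^1(\Omega)}$, which is finite because $\ut\in C^1$ in $\theta$, $\bar\Theta_{u,F,L}$ is compact, and each component of $\nabla_\theta\ut$ lies in $H^1(\Omega)$ (indeed in $W^{1,\infty}$, the spatial regularity coming from the smoothness of the network activations and the boundedness of $\Omega$). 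Hence any $V_\xi$ with $\sup_\theta\|V_\xi(\theta) - V^\star(\theta)\| \le \varepsilon/(2C)$ inherits the bound $\varepsilon$.

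The third and final step is to apply the classical universal approximation theorem for feed-forward networks with (smooth) activation functions: $V^\star$ is continuous on the compact set $\bar\Theta_{u,F,L}\subset\Rbb^m$, so there is a differentiable neural network $V_\xi:\bar\Theta_{u,F,L}\to\Rbb^m$ with $\sup_\theta\|V_\xi(\theta)-V^\star(\theta)\|\le\varepsilon/(2C)$; combining with the previous two steps gives $\|V_\xi(\theta)\cdot\nabla_\theta\ut - F[\ut]\|_{H^1(\Omega)}\le\varepsilon$ for all $\theta\in\bar\Theta_{u,F,L}$, as claimed. Differentiability of $V_\xi$ is obtained by choosing a $C^1$ activation (e.g.\ a sigmoid or softplus), for which the universal approximation theorem still applies. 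The only genuinely delicate point, as noted, is the selection argument in step one; everything after it is routine, modulo keeping careful track that the $W^{1,\infty}$ bound from Theorem \ref{thm:theta_exist} controls the $H^1(\Omega)$ bound sought here (immediate since $\Omega$ is bounded, so $\|\cdot\|_{H^1(\Omega)}\le |\Omega|^{1/2}\|\cdot\|_{W^{1,\infty}(\Omega)}$), and that the hypothesis $L>\epsilon$ is only used implicitly through Theorem \ref{thm:theta_exist} to guarantee $\Theta_{u,F,L}$ is nonempty and the fitting in part (ii) is meaningful.
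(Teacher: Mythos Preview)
Your plan is sound and would yield a correct proof, but it diverges from the paper's argument at the key step of producing a \emph{smooth} target field from the pointwise data $\alpha_\theta$. The paper does not use a selection argument at all. Instead, it works in the Hilbert norm $H^1$ from the outset and observes that for each fixed $\theta$ the residual
\[
\psi_\theta(w)\;=\;\|w\cdot\nabla_\theta u_\theta-F[u_\theta]\|_{H^1(\Omega)}^2\;=\;w^\top G(\theta)\,w-2\,w^\top p(\theta)+q(\theta)
\]
is a convex quadratic in $w$ with coefficients $G(\theta),p(\theta),q(\theta)$ smooth in $\theta$. Running gradient descent on $\psi_\theta$ for a \emph{fixed} number $K$ of steps with a fixed step size $h$, starting from $w_0=0$, yields an iterate $\mathcal O_\theta^{K,h}(0)$ that is a polynomial in $G(\theta),p(\theta)$ and hence automatically smooth in $\theta$; standard convex-optimization rates (together with a uniform bound $|v_\theta|\le M_v$ supplied by a lemma from \cite{gaby2023neural}) then guarantee $\psi_\theta(\mathcal O_\theta^{K,h}(0))\le(\varepsilon/2)^2$ once $K$ is large enough. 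The neural-network realization is then obtained, as in your step three, by universal approximation.

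In short: you replace the paper's ``finite gradient-descent oracle'' trick by a continuous selection (minimum-norm element or partition of unity) followed by mollification. Your route is more classical and does not exploit the quadratic structure in $w$; it would work equally well for the $W^{1,\infty}$ residual coming directly out of Theorem \ref{thm:theta_exist}. The paper's route is slicker in that smoothness of the target field is obtained for free, with no set-valued analysis, but it relies on the Hilbert structure and on the prior boundedness lemma to control the gradient-descent rate. Either way the final universal-approximation step and the Lipschitz stability estimate you describe are the same.
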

The proof of Proposition \ref{prop:F_exists} can be found in Appendix \ref{appx:B}.
Now we provide a few mild conditions on the problem setting of the PDE and its solutions, as follows.
\begin{assumption}
\label{assummp:compact}
    Let $\Omega$ be a bounded open set of $\Rbb^d$. The solutions to \eqref{eq:pde} are compactly supported within $\Omega$ for all $t \in [0,T]$.
\end{assumption}
Of particular importance, Assumption \ref{assummp:compact} allows us to be able to design $u_{\theta}$ so that $\nabla u_{\theta}(x)=0$ and $u_{\theta}(x)=0$ on the boundary and as such matches the true solutions on the boundaries as well (see e.g. \cite{gaby2023neural,du2021evolutional,bruna2022neural-galerkin} for more discussion on this point).
\begin{assumption}
\label{assump:operator}
    Let \eqref{eq:pde} be a second-order semi-linear PDE with $F[u](x):=(\sigma^2(x)/2)\Delta u(x)+f(x,u(x),\nabla u(x))$ for some (possibly) fully nonlinear function $f$ such that the following inequalities hold for all $x,y \in \Omega$, $u,v \in \Rbb$ and $p,q\in \Rbb^d$:
    \begin{subequations}
    \label{eq:lipschitz_op}
        \begin{align}
        |f(x,u,p)-f(y,v,q)| & \leq L_f(|x-y|+|u-v|+|p-q|) \label{eq:lipschitz_op_L}\\
        |\partial_x f(x,u,p)-\partial_x f(y,v,q)| & \leq L_x(|x-y|+|u-v|+|p-q|) \label{eq:lipschitz_op_Lx}\\
        |\partial_u f(x,u,p)-\partial_u f(y,v,q)| & \leq L_u(|x-y|+|u-v|+|p-q|) \label{eq:lipschitz_op_Lu}\\
        |\partial_p f(x,u,p)-\partial_p f(y,v,q)| & \leq L_p(|x-y|+|u-v|+|p-q|) \label{eq:lipschitz_op_Lp}
    \end{align}
    \end{subequations}
    for some $L_f,L_x,L_u,L_p>0$. Here $|\cdot|$ denotes absolute value or standard Euclidean vector norm.
\end{assumption}
It should be noted that $F$ as defined in Assumption \ref{assump:operator} satisfies the requirements of Assumption \ref{assump:regularity}, and thus we can use the results of Proposition \ref{prop:F_exists}. 
Now we are ready to present the main theorem of the present paper.
\begin{theorem}
Suppose Assumptions \ref{assummp:compact} and \ref{assump:operator} hold. Let $\varepsilon_0 \ge \varepsilon >0$ be arbitrary. Then for any $L>\varepsilon>0$ there exist a vector field $V_{\xi}$ and a constant $C>0$ depending only on $F$, $u_{\theta}$, and $V_{\xi}$, such that for any $u^* \in SB(\Omega,L,2)$ satisfying the evolution PDE in \eqref{eq:pde}, Assumption \ref{assummp:compact} and $u^*(\cdot,0)\in SB(\Omega,L,4)$ there is
\begin{equation}
\label{eq:utheta-ustar-L2}
    \|u_{\theta(t)}(\cdot)-u^*(\cdot,t)\|_{H^{1}(\Omega)}\leq\sqrt{2}(\varepsilon_{0} + 2{\varepsilon} t)e^{4Ct}
\end{equation}
for all $t$ as long as $\theta(t) \in \barTheta_{u,F,L}$, where $\theta(t)$ is solved from the ODE \eqref{eq:ode} with $V_{\xi}$ and initial $\theta(0)$ satisfying $\|u_{\theta(0)}(\cdot)-u^*(\cdot,0)\|_{L^{2}(\Omega)}\leq \varepsilon_0$. 
\label{thm:ode-sol-approx-error}
\end{theorem}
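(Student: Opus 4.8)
The plan is to view the parameter trajectory $\theta(t)$ as producing a function that solves the PDE up to a small residual, and then to close a Grönwall estimate in $H^1(\Omega)$ for its difference from $u^*$. Set $v(x,t):=u_{\theta(t)}(x)$. Since $\theta$ solves $\dot\theta(t)=V_{\xi}(\theta(t))$ and $u_{\theta}$ is $C^1$ in $\theta$, the chain rule gives $\partial_t v(x,t)=\nabla_{\theta}u_{\theta(t)}(x)\cdot V_{\xi}(\theta(t))$. As $F$ from Assumption \ref{assump:operator} satisfies Assumption \ref{assump:regularity}, Proposition \ref{prop:F_exists} (applied with the $\varepsilon$ and $L$ of the theorem) supplies a neural-network vector field $V_{\xi}$ with $\|\nabla_{\theta}u_{\theta}\cdot V_{\xi}(\theta)-F[u_{\theta}]\|_{H^1(\Omega)}\le\varepsilon$ for all $\theta\in\barTheta_{u,F,L}$. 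Hence, for every $t$ with $\theta(t)\in\barTheta_{u,F,L}$,
\[
\partial_t v=F[v]+R(\cdot,t),\qquad \|R(\cdot,t)\|_{H^1(\Omega)}\le\varepsilon .
\]

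Let $w:=v-u^*$. Subtracting the PDE for $u^*$ and using the semilinear form of $F$ in Assumption \ref{assump:operator},
\[
\partial_t w=\tfrac{\sigma^2}{2}\Delta w+\big(f(\cdot,v,\nabla v)-f(\cdot,u^*,\nabla u^*)\big)+R .
\]
By Assumption \ref{assummp:compact}, $u^*(\cdot,t)$ is compactly supported in $\Omega$, and by the construction discussed after Assumption \ref{assummp:compact} we take $u_{\theta}$ with $u_{\theta}=0$ and $\nabla u_{\theta}=0$ on $\partial\Omega$; thus $w$ and $\nabla w$ vanish near $\partial\Omega$, which legitimizes every integration by parts below. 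Because $\theta(t)$ stays in the bounded set $\barTheta_{u,F,L}$ while $u_{\theta}$ is smooth in $x$ and $C^1$ in $\theta$, $\|v(\cdot,t)\|_{W^{2,\infty}(\Omega)}$ is bounded uniformly in $t$; together with $u^*(\cdot,t)\in SB(\Omega,L,2)$ this gives a uniform bound on $\|w(\cdot,t)\|_{W^{2,\infty}(\Omega)}$. We also use that $\partial_x f,\partial_u f,\partial_p f$ are bounded on the relevant compact set, a consequence of the Lipschitz conditions \eqref{eq:lipschitz_op}.

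The crux is the energy estimate. Differentiating $\tfrac12\|w\|_{H^1(\Omega)}^2=\tfrac12(\|w\|_{L^2}^2+\|\nabla w\|_{L^2}^2)$ in $t$ and inserting the error equation, the terms split into three groups. (i) \emph{Diffusion}: integrating $\int w\tfrac{\sigma^2}{2}\Delta w$ and $\int\nabla w\cdot\nabla(\tfrac{\sigma^2}{2}\Delta w)$ by parts produces leading terms $-\int\tfrac{\sigma^2}{2}|\nabla w|^2$ and $-\int\tfrac{\sigma^2}{2}(\Delta w)^2$ (of favorable sign when $\sigma^2\ge0$, and in any case, together with the lower-order terms carrying derivatives of $\sigma^2$, absorbed into $C\|w\|_{H^1}^2$). (ii) \emph{Semilinear term}: \eqref{eq:lipschitz_op_L} gives $|f(\cdot,v,\nabla v)-f(\cdot,u^*,\nabla u^*)|\le L_f(|w|+|\nabla w|)$, so its $L^2$-pairing with $w$ is $\le C\|w\|_{H^1}^2$; for the pairing of its gradient with $\nabla w$ one expands $\nabla\big(f(\cdot,v,\nabla v)-f(\cdot,u^*,\nabla u^*)\big)$ by the chain rule, and the only derivative-losing piece, $\partial_p f(\cdot,v,\nabla v)\,\nabla^2 w$, is paired against $\nabla w$ and converted by one integration by parts into $-\tfrac12\int\big(\nabla\!\cdot\partial_p f(\cdot,v,\nabla v)\big)|\nabla w|^2\le C\|w\|_{H^1}^2$, since $\nabla\!\cdot\partial_p f(\cdot,v,\nabla v)$ is bounded (via \eqref{eq:lipschitz_op_Lp} and the $W^{2,\infty}$ bound on $v$); all remaining pieces are bounded using \eqref{eq:lipschitz_op_Lx}--\eqref{eq:lipschitz_op_Lp} and $\|u^*(\cdot,t)\|_{W^{2,\infty}}\le L$, again by $C\|w\|_{H^1}^2$. (iii) \emph{Residual}: $|\int wR+\int\nabla w\cdot\nabla R|\le\|w\|_{H^1}\|R\|_{H^1}\le\varepsilon\|w\|_{H^1}$. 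Collecting, there is $C>0$ depending only on $F$ (through $\sigma$ and $L_f,L_x,L_u,L_p$), on $u_{\theta}$ (through its uniform $W^{2,\infty}$ bound over $\barTheta_{u,F,L}$), and on $V_{\xi}$ (through $\barTheta_{u,F,L}$), but not on $u^*$, with
\[
\tfrac12\tfrac{d}{dt}\|w(\cdot,t)\|_{H^1(\Omega)}^2\le 4C\|w(\cdot,t)\|_{H^1(\Omega)}^2+\varepsilon\|w(\cdot,t)\|_{H^1(\Omega)} .
\]

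Dividing by $\|w\|_{H^1}$ yields $\tfrac{d}{dt}\|w\|_{H^1}\le 4C\|w\|_{H^1}+\varepsilon$; multiplying by $e^{-4Ct}$, integrating, and using $\tfrac1{4C}(1-e^{-4Ct})\le 2t$ gives $\|w(\cdot,t)\|_{H^1(\Omega)}\le(\|w(\cdot,0)\|_{H^1(\Omega)}+2\varepsilon t)e^{4Ct}$. Finally, the initialization hypothesis $\|u_{\theta(0)}-u^*(\cdot,0)\|_{L^2(\Omega)}\le\varepsilon_0$, combined with the smoothness of the initial datum ($u^*(\cdot,0)\in SB(\Omega,L,4)$) so that the fit of a sufficiently large network can be arranged to be $W^{1,\infty}$- (hence $H^1$-) accurate as well (cf.\ Theorem \ref{thm:theta_exist}(ii)), gives $\|w(\cdot,0)\|_{H^1(\Omega)}\le\sqrt2\,\varepsilon_0$, whence \eqref{eq:utheta-ustar-L2}, valid for all $t$ with $\theta(t)\in\barTheta_{u,F,L}$. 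The main obstacle is group (ii): ensuring the fully nonlinear first-order term does not lose a derivative in the $\dot H^1$ bound, which forces the integration-by-parts maneuver and requires the resulting constant to be controlled solely by the Lipschitz data \eqref{eq:lipschitz_op_Lx}--\eqref{eq:lipschitz_op_Lp} and the uniform $W^{2,\infty}$-control of $u_{\theta}$ over $\barTheta_{u,F,L}$, uniformly in the admissible $u^*$; a lesser but genuine subtlety is the $t=0$ bookkeeping producing the factor $\sqrt2$.
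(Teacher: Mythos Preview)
Your proposal is correct and follows essentially the same route as the paper: define the $H^1$ energy $\tfrac12\|w\|_{H^1}^2$, differentiate in $t$, split into diffusion, semilinear, and residual contributions, and close via Gr\"onwall after showing the only derivative-losing term $\int \nabla w\cdot(\nabla^2 w\,\partial_p f)$ can be converted by one integration by parts into $-\tfrac12\int(\nabla\!\cdot\partial_p f)\,|\nabla w|^2$.

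Two small remarks. First, in the Hessian split you factor out $\partial_p f(\cdot,v,\nabla v)$ (the approximate solution), whereas the paper factors out $\partial_p f(\cdot,u^*,\nabla u^*)$ (the true solution); both work, since in your version the leftover piece carries $\nabla^2 u^*$, bounded by $u^*\in SB(\Omega,L,2)$, and $\nabla\!\cdot\partial_p f(\cdot,v,\nabla v)$ is controlled by the uniform $W^{2,\infty}$ bound on $u_\theta$ over $\barTheta_{u,F,L}$ together with \eqref{eq:lipschitz_op_Lp}. Second, your handling of $\|w(\cdot,0)\|_{H^1}\le\sqrt{2}\,\varepsilon_0$ goes beyond the stated hypothesis (only $L^2$ closeness of the initial fit is assumed) by appealing to Theorem~\ref{thm:theta_exist}(ii); the paper makes the same leap, simply writing $\sqrt{E(0)}=\varepsilon_0$ without further comment, so you are at least as careful as the original here.
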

\begin{proof}
Due to Assumptions \ref{assump:operator} and \ref{assump:regularity}, there exist a vector field $V_{\xi}$ and a bounded open set $\Theta_{u,F,L}$ that satisfy the results of Proposition \ref{prop:F_exists}. 
Assume $\theta(t)\in \Theta_{u,F,L}$ is the solution to \eqref{eq:ode}, and $\|u_{\theta(0)}(\cdot)-u^*(\cdot,0)\|_{2}\leq \varepsilon_0$ (such $\theta(t)$ is guaranteed to exist for $\varepsilon\leq \varepsilon_0$), where $u^*$ is the true solution of the PDE \eqref{eq:pde}. As $u^* \in SB(\Omega,L,2)$, there exists $M_f:=\|\partial_u f \|_{L^{\infty}(\Omega \times [-L,L] \times [-L,L]^d)}$ because $u^* \in SB(\Omega,L,2)$ which implies $\partial_u f(x,u^*(x,t),\nabla u^*(x,t)) \leq M_f$ a.e.\ in $\Omega$ for every $t$. Similarly, by Assumption \ref{assump:operator} we have $\partial_p f$ Lipschitz continuous jointly in its arguments and thus there is $\nabla \cdot \partial_p f(x,u^*(x,t),\nabla u^*(x,t)) \leq M_{f}'$ a.e.\ in $\Omega$ for some $M_f'>0$.

In what follows, we denote $\|\cdot\|_2 := \|\cdot\|_{L^{2}(\Omega)}$ and 
\[
u_t:=u_{\theta(t)}, \quad v_t:=u_t^*, \quad e_0(\cdot,t):=u_t(\cdot) - v_t(\cdot) , \quad e_1(\cdot,t):=\nabla u_t(\cdot) - \nabla v_t(\cdot)
\]
to simplify the notations hereafter. Define
\[
E(t)=\frac{1}{2}\int_{\Omega}|u_t(x) - v_t(x)|^2 \,dx + \frac{1}{2}\int_{\Omega}|\nabla u_t(x)-\nabla v_t(x)|^2\,dx= \frac{\|e_0(\cdot,t)\|_2^2}{2}+\frac{\|e_1(\cdot,t)\|_2^2}{2}.
\]
Taking the time derivative of $E$, we find
\begin{align}
    \dot{E}(t)&=\ \langle e_0(\cdot,t),\partial_t e_0(\cdot,t)\rangle + \langle e_1(\cdot,t), \partial_t e_1(\cdot,t)\rangle \nonumber \\
    &= \  \langle u_t-v_t,F[u_t] - F[v_t] \rangle + \langle \nabla u_t- \nabla v_t, \nabla F[u_t] - \nabla F[v_t] \rangle + \langle u_t-v_t, \epsilon_t \rangle + \langle \nabla u_t - \nabla v_t , \nabla \epsilon_t \rangle \nonumber \\
    &=:\  \text{I} + \text{II} + \text{III} + \text{IV}, \label{eq:Edot}
\end{align} 
where $\langle \cdot, \cdot \rangle$ is the inner product in the $L^2(\Omega)$ space, $\epsilon_t(\cdot):=\nabla_{\theta}u_{\theta(t)}(\cdot) \cdot V_{\xi}(\theta(t))-F[u_{\theta(t)}](\cdot)=\partial_t u_t(\cdot) - F[u_t](\cdot)$, and the terms III and IV result from $\partial_t u_t = F[u_t] + \epsilon_t$. We shall now proceed to estimate the four terms above.

For the term $\text{I}$, we first notice that
\begin{equation}
    \label{eq:I_1}
    \begin{aligned}
    \text{I}_1:=&\ \big\langle u_t-v_t,\frac{\sigma^2}{2}\left(\Delta u_t - \Delta v_t\right) \big\rangle\\
    =&\ \int_{\Omega} \frac{\sigma^2 (x)}{2}e_0(x,t)\Delta e_0(x,t) dx=\int_{\Omega}\frac{\sigma^2(x)}{2}\left(\nabla \cdot (e_0(x,t) \nabla e_0(x,t))-|\nabla e_0(x,t)|^2 \right)dx\\
    \leq &\ \int_{\partial \Omega} \frac{\sigma^2(x)}{2}e_0(x,t)\nabla e(x,t) \cdot \vec{n}(x)\,dS(x)
    -\int_{\Omega} (\sigma(x)\nabla \sigma(x)) \cdot (e_0(x,t)\nabla e_0(x,t)) dx\\
    \leq &\  M_{\sigma} \| e_0(\cdot,t)\|_2\|\nabla e_0(t, \cdot) \|_2,
    \end{aligned}
\end{equation}
where $M_{\sigma}:=\|\sigma\nabla \sigma\|_{L^{\infty}} < \infty$, $\vec{n}(x)$ is the unit outer normal at $x \in \partial \Omega$, and the first inequality is due to $\int_{\Omega} \frac{\sigma^2}{2}|\nabla e_0|^2 \ge 0$.
%
Furthermore, we can show a bound for the quantity:
\begin{equation}
    \label{eq:I_2}
    \begin{aligned}
        \text{I}_2:=& \langle e_0(\cdot,t), \fut - \fvt \rangle
        \leq L_f\|e_0(\cdot,t)\|_2(\|e_0(\cdot,t)\|_2 + \|\nabla e_0(\cdot,t)\|_2),
    \end{aligned}
\end{equation}
where we used the Lipschitz bound on $f$ to arrive at the inequality.
Realize $\text{I}=\text{I}_1+\text{I}_2$ and so
\begin{equation}
    \label{eq:I}
    \text{I} \leq (M_{\sigma}+L_f) \| e_0(\cdot,t)\|_2\|\nabla e_0(t, \cdot) \|_2 + L_f\|e_0(\cdot,t)\|_2^2.
\end{equation}

Next, consider the term
\begin{equation}
    \label{eq:II_2}
    \begin{aligned}
        \big\langle \nabla u_t - \nabla v_t, \nabla (\frac{\sigma^2}{2}\Delta u_t - \frac{\sigma^2}{2}\Delta v_t) \big\rangle 
        =&\ \int_{\partial \Omega} \frac{\sigma^2(x)}{2}\Delta e_0(x,t) \nabla e_0(x,t) \cdot \vec{n}(x) \,dS(x)\\
        &\ \qquad -\int_{\Omega} \frac{\sigma^2 (x)}{2}(\Delta e_0(x,t))^2dx\\
        \leq&\ 0,
    \end{aligned}
\end{equation}
where the inequality comes as $\nabla e_0(x,t)=0$ on the boundary by Assumption \ref{assummp:compact}. Now we define
\begin{equation}
    \label{eq:II_22}
    \begin{aligned}
        \text{II}_2 =&\ \langle \nabla u_t- \nabla v_t, \nabla [f(\cdot,u_t,\nabla u_t)-f(\cdot, v_t, \nabla v_t)]\rangle\\
        =& \ \langle \nabla e_0(\cdot,t), \partial_x f(\cdot, u_t, \nabla u_t)- \partial_x f(\cdot, v_t, \nabla v_t) \rangle \\
        &\ \quad +  \langle \nabla e_0(\cdot,t), (\partial_u f(\cdot, u_t, \nabla u_t) \nabla u_t-\partial_u f(\cdot, v_t, \nabla v_t)\nabla v_t) \rangle\\
        &\ \quad +  \langle \nabla e_0(\cdot,t), (\nabla^2 u_t\partial_p f(\cdot, u_t, \nabla u_t)-\nabla^2 v_t\partial_p f(\cdot, v_t, \nabla v_t)) \rangle\\
        =&\ \text{II}_{2,1}+\text{II}_{2,2}+\text{II}_{2,3}.
    \end{aligned}
\end{equation}
We now estimate these three terms using our assumptions on the function $f$. First
\[
    \begin{aligned}
        \text{II}_{2,1}\leq &\ L_x \int_{\Omega}\nabla e_0(x,t) (|u_t(x)-v_t(x)|+|\nabla u_t(x) - \nabla v_t(x)|)dx\\
        \leq &\ L_x(\|\nabla e_0(\cdot,t)\|_2\|e_0(\cdot,t)\|_2+\|\nabla e_0(\cdot,t)\|_2^2).
    \end{aligned}
\]
Since $\barTheta_{u,F,L}$ is compact and $\theta(t)$ is bounded, it is easy to see there exists a constant $M_u>0$ such that $\|u_t\|_{W^{2,\infty}(\Omega)}\leq M_u$ where $M_u$ depends only on $\barTheta_{u,F,L}$ and the architecture of $u_t$. Therefore,
\[
\begin{aligned}
    \text{II}_{2,2}=&\ \langle \nabla e_0(\cdot,t), \left(\partial_u \fut-\partial_u \fvt \right)\nabla u_t+\partial_u \fvt \nabla e_0(\cdot,t) \rangle\\
    \leq &\  M_u L_u\|\nabla e_0(\cdot,t)\|_2(\|e_0(\cdot,t)\|_2+\|\nabla e_0(\cdot,t)\|_2)+M_f\|\nabla e_0(\cdot,t)\|_2^2.
\end{aligned}
\] 
%
%
Finally, we have
\[
\begin{aligned}
    \text{II}_{2,3}=&\  \langle \nabla e_0(\cdot,t), \nabla^2 u_t\partial_p \fut-\nabla^2 v_t\partial_p \fvt \rangle\\
    = &\  \langle \nabla e_0(\cdot,t), \nabla^2 u_t(\partial_p \fut - \partial_p \fvt) \rangle + \langle \nabla e_0(\cdot,t), (\nabla^2 e_0(\cdot,t))\partial_p \fvt \rangle.
\end{aligned}
\] Now we see
\begin{equation}
    \label{eq:II_231}
    \begin{aligned}
    \langle \nabla e_0(\cdot,t), \nabla^2 u_t(\partial_p \fut - & \partial_p \fvt) \rangle \leq M_u L_p \|\nabla e_0(\cdot,t)\|_2(\|e_0(\cdot,t)\|_2+ \|\nabla e_0(\cdot,t)\|_2)
    \end{aligned}
\end{equation}
and
\begin{equation}
    \label{eq:II_232}
    \begin{aligned}
        \langle \nabla e_0(\cdot,t), (\nabla^2 e_0(\cdot,t))\partial_p \fvt \rangle 
        =&\  \int_{\Omega}\partial_p f(x,v_t(x),\nabla v_t(x))^{\top}\nabla^2 e_0(x,t) \nabla e_0(x,t)dx\\
        =&\ \int_{\Omega}\partial_p f(x,v_t(x),\nabla v_t(x))^{\top}\nabla\left(\frac{1}{2} |\nabla e_0(x,t)|^2 \right)dx\\
        =&\ -\frac{1}{2}\int_{\Omega}\nabla \cdot(\partial_p f(x,v_t(x),\nabla v_t(x)))|\nabla e_0(x,t)|^2dx\\
        \leq &\  \frac{d}{2}M_{f}'\|\nabla e_0(\cdot,t)\|_2^2,
    \end{aligned}
\end{equation}
where we used the fact that $\nabla e_0(\cdot,t)=0$ on $\partial \Omega$ to get $\int_{\partial\Omega} \partial_p f |\nabla e_0|^2 dS(x) = 0$.
Applying \eqref{eq:II_231} and \eqref{eq:II_232} in $\text{II}_{2,3}$, we find
\[
\text{II}_{2,3}\leq M_uL_p \|\nabla e_0(\cdot,t)\|_2\|e_0(\cdot,t)\|_2+\left(M_uL_p+\frac{d}{2}M_{f}'\right)\|\nabla e_0(\cdot,t)\|_2^2.
\]Define 
\[
C:= L_x+M_uL_u+M_uL_p+\max \Big\{M_{\sigma}+L_f, \ M_f+\frac{d}{2}M_{f}' \Big\},
\] and we can see
\begin{equation}
    \label{eq:IandII}
    \begin{aligned}
    \text{I} +\text{II} \leq &\ C(\|e_0(\cdot,t)\|_2^2+\|e_0(\cdot,t)\|_2\|\nabla e_0(\cdot,t)\|_2+\|\nabla e_0(\cdot,t)\|_2^2)\\
    \leq&\ 2C\left(\|e_0(\cdot,t)\|_2^2+\|\nabla e_0(\cdot,t)\|_2^2\right).
    \end{aligned}
\end{equation}

Now, using our bound on $\epsilon_t$, we can bound $\text{III}$ and $\text{IV}$:
\begin{equation}
    \label{eq:IIIandIV}
    \begin{aligned}
        \text{III}+\text{IV}=&\ \langle e_0(\cdot,t), \epsilon(\cdot,t)\rangle+\langle \nabla e_0(\cdot,t), \nabla \epsilon(\cdot,t)\rangle \\
        \leq &\  \bar{\varepsilon}(\|e_0(\cdot,t)\|_2+\|\nabla e_0(\cdot,t)\|_2)\\
         \leq &\ \sqrt{2}\bar{\varepsilon}\sqrt{\|e_0(\cdot,t)\|_2^2+\|\nabla e_0(\cdot,t)\|_2^2}.
    \end{aligned}
\end{equation}
Applying \eqref{eq:IandII} and \eqref{eq:IIIandIV} to \eqref{eq:Edot} and dividing both sides by $E(t)$, we see
\[
\frac{d}{dt}\sqrt{E(t)}=\frac{ \dot{E}(t)}{\sqrt{E(t)}}\leq 4C\sqrt{E(t)}+2\bar{\varepsilon}.
\] 
Applying Gr\"{o}nwall's inequality, we conclude
\[
\sqrt{E(t)} \leq (\sqrt{E(0)}+2\bar{\varepsilon}t)e^{4Ct}=(\varepsilon_0+2\bar{\varepsilon}t)e^{4Ct}.
\]
The above inequality holds for any $u^*(\cdot,0)\in SB(\Omega,L,4)$ by Proposition \ref{prop:F_exists}, and so the proof is completed.
\end{proof}

\begin{remark}
Theorem \ref{thm:ode-sol-approx-error} provides an upper bound on the error between $u_{\theta(t)}$ and $u^*(\cdot,t)$: it depends on the projection error $\varepsilon$ and initial error $\varepsilon_0$ linearly, while it grows exponentially fast in $t$ which is expected. Note that Theorem \ref{thm:ode-sol-approx-error} extends Theorem 3.6 in \cite{gaby2023neural} from PDEs that are linear in the first- and second-order terms to general semi-linear PDEs. This extension requires substantial efforts and new proof steps. 
\end{remark}

\section{Numerical Results}
\label{sec:numerical-results}

\subsection{Implementation}
\label{subsec:implementation}


Throughout all experiments, we select the following special form for our neural network $V_{\xi}: \Rbb^m \to \Rbb^m$, 
\begin{equation}
    \label{eq:V-network-def}
    V_{\xi}(\theta)=\phi^{(s)}_{\xi_s}(\theta)\left(\phi^{(r)}_{\xi_r}(\theta)+\phi^{(e)}_{\xi_e}(\theta)\odot \theta\right),
\end{equation}
where $\xi=(\xi_s,\xi_r,\xi_e)$, $\odot$ is component wise multiplication, and $\phi^{(s)}:\Omega \to \Rbb$ is a sigmoid feedforward network, $\phi^{(e)}:\Omega \to \Rbb^d$ is a ReLU feedforward network, and $\phi^{(r)}:\Omega \to \Rbb^d$ is a ReLU ResNet, each of depth $L$ and constant width $w$ as shown in Table \ref{tab:parameters}. 
We decided to use the structure above in order to represent both linear functions through $\phi^{(r)}$ and exponential functions through $\phi^{(e)}$, while $\phi^{(s)}$ acts as a scaling network to handle the varying velocities the different regions of the parameter space may use. It appears to perform better than the other few generic networks we tested.

The loss function \eqref{eq:many_control_problem} records the total error accumulated along the trajectories. Sometimes it can be further augmented by an additional loss containing relative errors to target values obtained by, e.g., \cite{du2021evolutional}. In \cite{du2021evolutional}, the goal is to solve a given PDE (not to approximate solution operator) but it provides potential target locations $\bar{\theta}(T)$ by solving a sequence of linear least squares problems from a given $\bar{\theta}(0)$. We leverage this method to generate an additional set $\Tcal=\{\bar{\theta}_i(T):\ i=1,\dots,|\Tcal|\}$, and add the following augmentation term to the loss function \eqref{eq:many_control_problem}:
\begin{equation}
\label{eq:total-loss}
    \ell_{aug}(\xi):=\frac{1}{|\Tcal|}\sum_{i=1}^{|\Tcal|}\Big(\ell({\gamma}_i(T);\xi)+\frac{\|u_{{\theta}_i(T)}-u_{\bar{\theta}_i(T)}\|_{2}^2}{\|u_{\bar{\theta}_i(T)}\|_{2}^2}\Big),
\end{equation}
where the first term in the sum is due to the same logic in \eqref{eq:many_control_problem} by starting from $\theta_i(0) = \bar{\theta}_i(0)$ for all $i$ and the second term is to match the targets computed by \cite{du2021evolutional}.
Here ${\theta}_i(T)$ is the first $m$ components of $\gamma_i(T)$.
Since time-marching is very slow, we only add a small sized $\Tcal$ for some of our experiments. Nevertheless, they help to further improve approximation of $V_{\xi}$ in certain experiments.
%
%
Finally, we note that the empirical estimations in the relative errors are on $L^2(\Omega)$ norm and thus are again approximated by Monte Carlo integration over $\Omega$. 
%
%
Other parameters are also shown in Table \ref{tab:parameters}.
%

For all the experiments, we use the standard ADAM optimizer \cite{kingma2015adam:} with learning rate 0.0005, $\beta_1=0.9$, $\beta_2=0.999$.
We terminate the training process when the percent decrease of the loss is less than $0.1 \%$ averaged over the past 20 steps or 10,000 total iterations whichever comes first. 
Once $V_{\xi}$ is learned, we use the adaptive Dormand-Prince (DOPRI) method to approximate $\theta(t)$ from the ODE in \eqref{eq:ode}. All the implementations and experiments are performed using PyTorch in Python 3.9 in Windows 10 OS on a desktop computer with an AMD Ryzen 7 3800X 8-Core Processor at 3.90 GHz, 16 GB of system memory, and an Nvidia GeForce RTX 2080 Super GPU with 8 GB of graphics memory. 

\begin{table}[]
\caption{The problem dimension $d$, the size of augument set $\Tcal$, the mini-batch size $K$ in training, and the width and depth of $V_{\xi}$ used in each of the numerical experiments.}
    \label{tab:parameters}
    \centering
    \begin{tabular}{ccccccc}
    \toprule
       Experiment  & $d$ &  $|\Tcal|$ & $K$  & Width & Depth \\
       \midrule
         Heat Equation &10&0 &100  & 1000 & 5 \\
         Hyperbolic PDE &10&1000 &100  & 600 & 5 \\
         HJB Equation &8&250 &512  & 1000 & 5 \\
         \bottomrule
    \end{tabular}
\end{table}

\subsection{Numerical Results}
\subsubsection{Comparison to Existing Methods}
First, we consider an initial value problem corresponding to the 10D heat equation ($d=10$):
\begin{equation}
\partial_t u(x,t) = \Delta u(x,t), \quad \forall\, x \in \Omega, t \in [0,T]
\label{eq:ivp-heat-eq}
\end{equation}
with initial value $u(x,0)=g(x)$, where $\Omega=(-1,1)^{10} \subset \Rbb^{10}$ and we use periodic boundary conditions.
Note that this is beyond the restrictions we used in error analysis, which suggests that the analysis may be extended to more general cases. 
As most of the initial conditions result in rapid diffusion, we use $T=0.1$ in this test. We define our neural network as follows:
\begin{equation}
\label{eq:experiment-u}
u_{\theta}(x)=\sum_{i=1}^{80} c_i \tanh \left(a_i^{\top}\sin\left(\pi(x-\beta)\right)-b_i\right).
\end{equation}
We choose this network with trainable parameters $\theta = (\beta, a_i,b_i,c_i)$ with $i=1,\dots,80$, $a_i,\beta \in \Rbb^d$ and $b_i,c_i \in \Rbb$ which enforce the periodic boundary conditions as $\sin(\pi(1-p))=\sin(\pi(-1-p))$ for any $p \in \Rbb$ and the $\sin$ function above acts component-wisely.
%
Hence, $\theta \in \Rbb^{m}$ has dimension $m=970$. We train the vector field $V_{\xi}$ using the nonlinear least squares (NLS) method in \cite{gaby2023neural} and the proposed method (Alg.~\ref{alg:neural-control}) and test their performance on randomly generated initial conditions. 
Specifically, we generate a set of 100,000 points sampled uniformly from $\Theta:=\{\theta\in \Rbb^m :\ |\theta|\leq 20\}$ and another 50,000 from $N(0_m,0.5I_m)$, use them as the initials to train the control field $V_{\xi}$ defined in \eqref{eq:V-network-def} for both of the compared methods. 
%
%
%
%
\begin{figure}
    \begin{subfigure}{0.45\textwidth}
    \includegraphics[width=\linewidth]{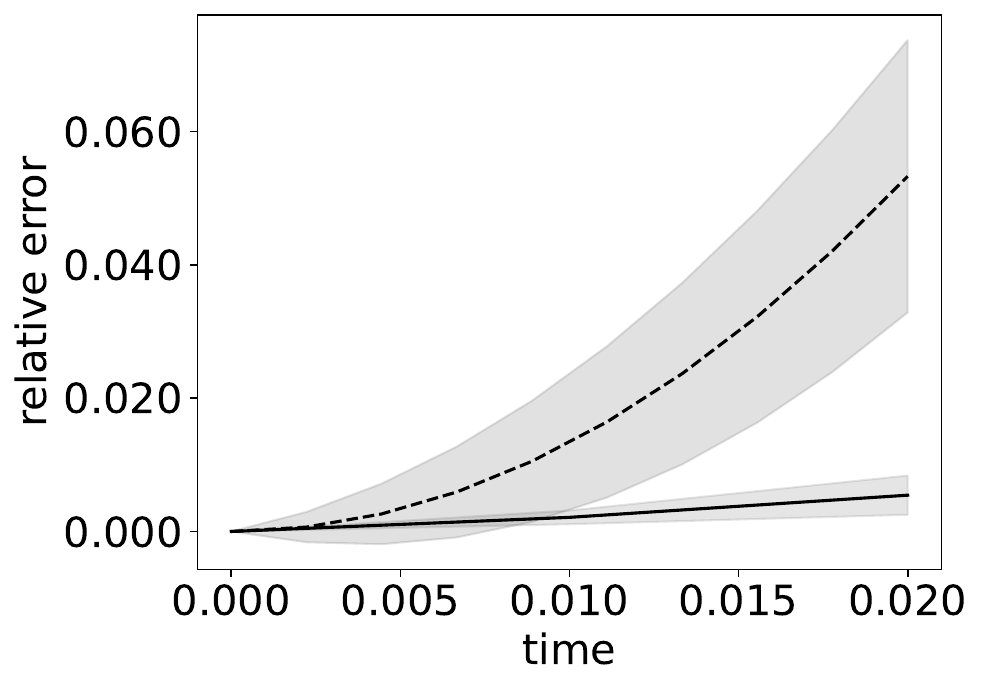}
    \caption{$T=0.02$}
    \label{fig:errors-a}
    \end{subfigure}
    \begin{subfigure}{0.45\textwidth}
    \includegraphics[width=\linewidth]{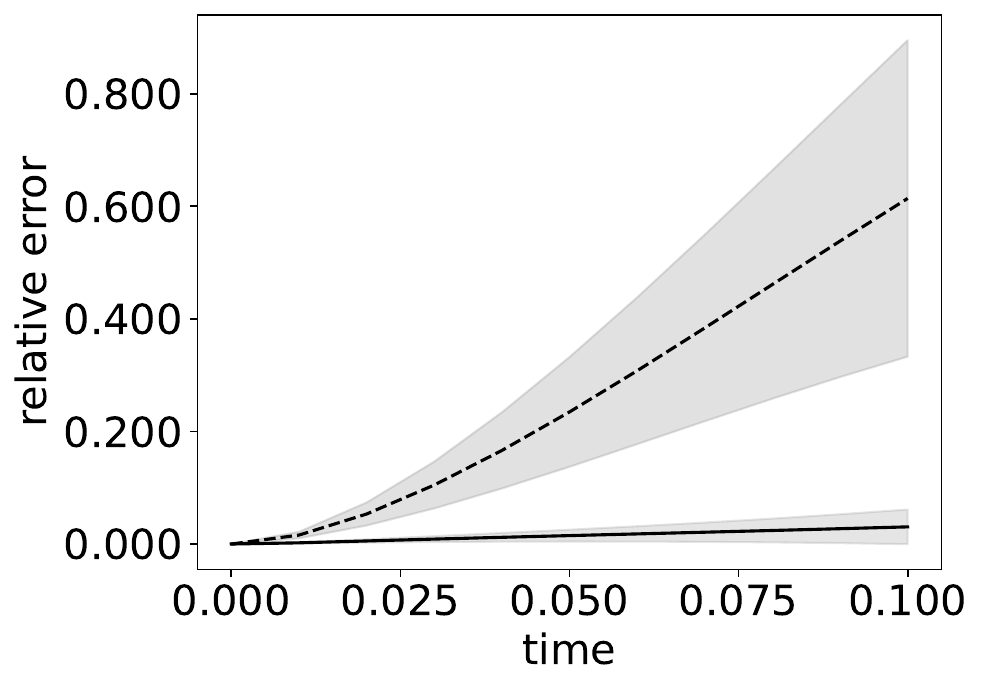}
    \caption{$T=0.10$}
    \label{fig:errors-b}
    \end{subfigure}
    \caption{Comparison of NLS \cite{gaby2023neural} method (black dashed line) and the proposed method (black solid line). While the previous method only allows accurate results for a small time scale $T=0.02$, the new method allows more accuracy for longer time scales $T=0.1$. 
    }
    \label{fig:error-heat}
\end{figure}

After the training is completed, we generate 100 random initials $g=u_{\theta(0)}$ where $\theta(0) \sim N(0_m,0.5I_m)$. For each $g$, we compute the ODE \eqref{eq:ode} using the trained $V_{\xi}$ of the comparison methods, obtain an approximate solution $u_{\theta(t)}$, and compute the relative error $\|\utt (\cdot) - u^*(\cdot,t)\|_2^2/\|u^*(\cdot,t)\|_2^2$ where the true solution is given by $u^*(x,t)=\int g(y)N(y-x,2t I_d)\,dy$ \cite{evans1998partial}, where $N$ stands for the density of Gaussian.
Then we show the average and standard deviation of the relative errors over these 100 initials versus time $t$ in  Figure \ref{fig:error-heat}. 

Figure \ref{fig:error-heat} shows that Alg.~\ref{alg:neural-control} significantly improves the approximation accuracy compared to the NLS method provided by \cite{gaby2023neural}. Specifically, $V_{\xi}$ obtained by NLS yields reasonably accurate solutions up to $t= 0.01$ with $<2\%$ relative error, but the error grows unacceptably large to $60\%$ on average at $t=0.1$. 
On the other hand, the vector field $V_{\xi}$ trained by Alg.~\ref{alg:neural-control} yields much more stable and accurate solution $u_{\theta(t)}$. The relative error maintains to be lower than $0.3\%$ at $t=0.01$ and $4\%$ for the proposed method at $t=0.1$.
We also show the approximation $u_{\theta(t)}$ obtained by NLS and the proposed method for $t=0.02$ in Figure \ref{fig:heat_qualitative}. Due to space limitation, we only draw 5 different initial function $g$ (as shown from the top to bottom rows in Figure \ref{fig:heat_qualitative}). From left to right columns, they are the initial $g$ (first column), the true solution $u^*$ (second column), the result obtained by the proposed method (third column), and the approximation obtained by NLS (fourth column), where the last three are at $t=0.02$.

The training time for NLS takes approximately 1.3 hours using NLS, but only about 15 minutes using Alg.~\ref{alg:neural-control}. 
The testing time for individual initial condition, i.e., running time of Alg.~\ref{alg:solving-ivp}, takes an average of 0.84 second with a standard deviation of 0.33 second. We remark that this time can be significantly reduced when multiple initials are queried in batch. For example, it takes only 1.94 seconds to solve 100 different initials when they are batched. 
%
%
These results demonstrate the significant improvement of the proposed method in both accuracy and efficiency. 
%
\begin{figure}
    \centering
\includegraphics[width=0.95\linewidth]{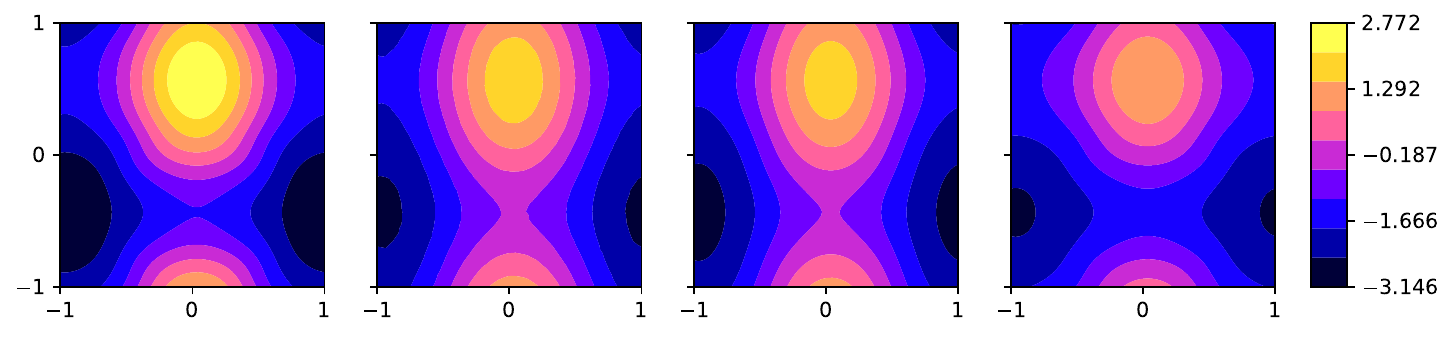}
\includegraphics[width=0.95\linewidth]{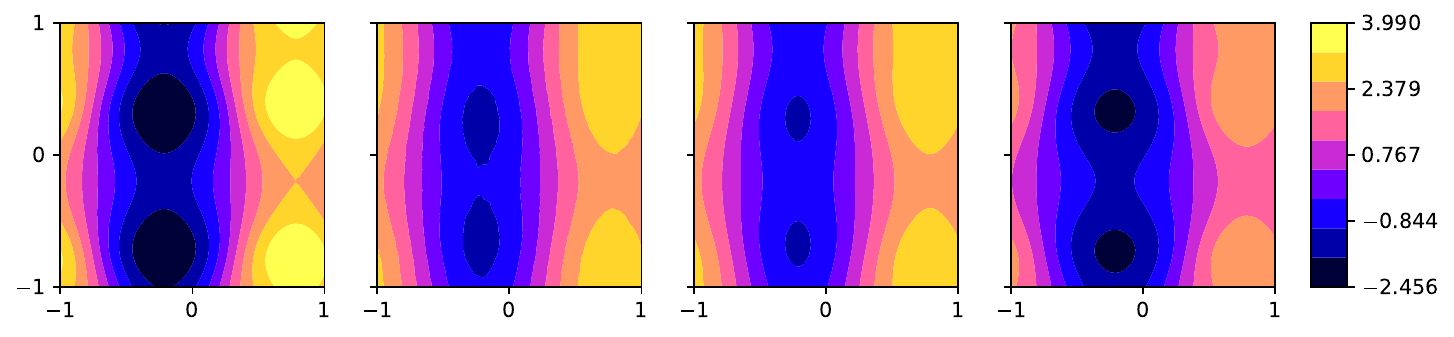}
\includegraphics[width=0.95\linewidth]{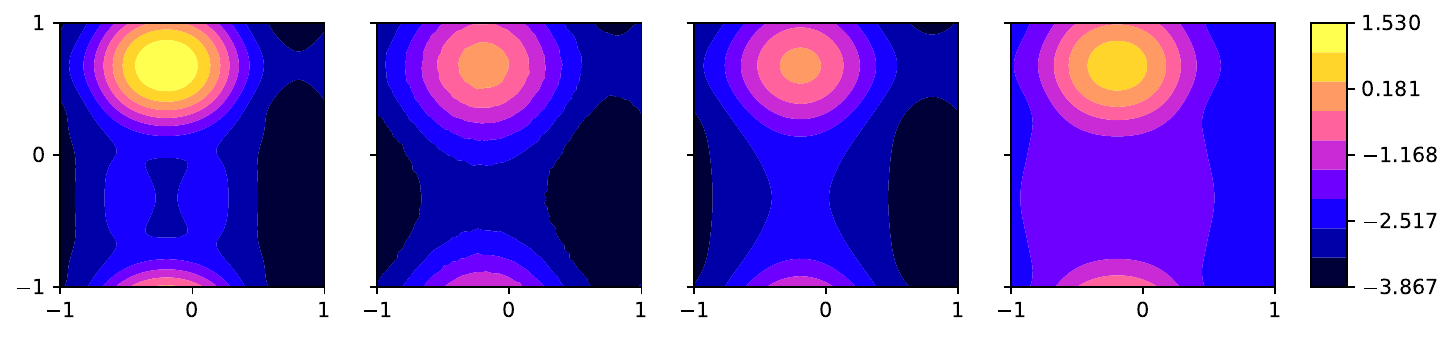}
\includegraphics[width=0.95\linewidth]{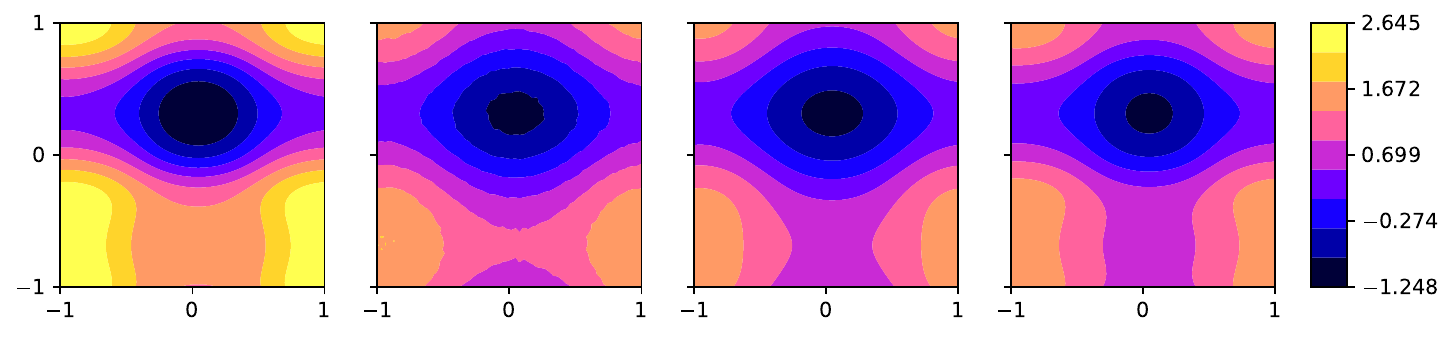}
\includegraphics[width=0.95\linewidth]{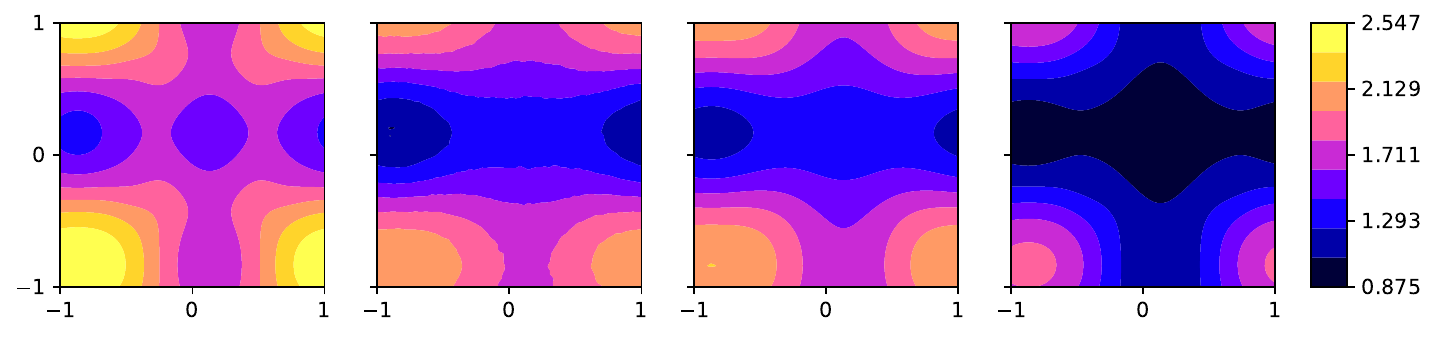}
    \caption{Comparison with NLS \cite{gaby2023neural} on the Heat Equation \eqref{eq:ivp-heat-eq}. All plots show the $(x_1,x_2)$ plane. (First column) initial $g$; (Second column): reference solution; (Third column) Proposed method; and (Fourth column) NLS \cite{gaby2023neural}. In all cases, the proposed method demonstrates significant improvement on solution accuracy. 
    }
    \label{fig:heat_qualitative}
\end{figure}

\subsubsection{Hyperbolic PDE}
Our next test is on the following 10D nonlinear hyperbolic PDE:
\begin{equation}
\label{eq:hyperbolic}    
\partial_t u(x,t) = F[u](x,t):=2\nabla \cdot \tanh(u(x,t))
\end{equation}
with initial value $u(x,0)=g(x)$ and $\Omega = (-1,1)^{10}$.
We remark that this is a very challenging problem and its solutions can often develop shocks which makes most methods suffer.
Therefore, we choose $T=0.15$ as this is the earliest we observed shock waves in the examples we shall compare against. 
Then we enforce periodic boundary conditions by using the same network $\ut$ in \eqref{eq:experiment-u} as above. Here, we sample $\theta_i(0)$ such that $a_i,\beta \sim N(0_d,I_d)$ and $b_i \sim N(0,1)$ for $i =1,\ldots, 80$, and $c=(c_1,c_2,\ldots,c_{80})^{\top}$ is sampled uniformly from the sphere of radius 1. For this method to be scalable, we have to use special initials $g=u_{\theta(0)}$ with $a_{i}(0)=(a_{i1}(0),0,\ldots,0)^{\top}$ for $i=1,\ldots,80$. For the $\theta$'s to be used in the terminal set $\Tcal$, we make use of this form of our tested $g$'s and sample $\beta,b_i,$ and $c$ as above but use $a_{i}(0)=(a_{i1}(0),0,\ldots,0)^{\top}$ with $a_{i1}\sim N(0,1)$.

The PDE \eqref{eq:hyperbolic} does not have analytic solutions. For comparison purpose, we use a dedicated solver \cite{mao2020physics}, which is a numerical approximation using a first-order upwind scheme, with 4000 time steps and 1000 spatial steps to compute the reference solution. We use as initial conditions the $g$'s defined above as their 1-d nature allows the computation of our reference solutions.
We stress that the initials we use here for testing are not contained within the samples used for training.
After training, we use the learned control $V_{\xi}$ to generate solutions for all these initials.
We plot mean and relative error of our method compared to the reference numerical solutions for 100 different random initials in Figure \ref{fig:error-hyper} (left). We observe a relative error of less than $4\%$ on average, and more than $70\%$ of results are below $8\%$ relative error in our tests.
These results suggest that our proposed method has great potential to be applied to more challenging PDEs. 
%

\begin{figure}
    \centering
    \includegraphics[scale=0.45]{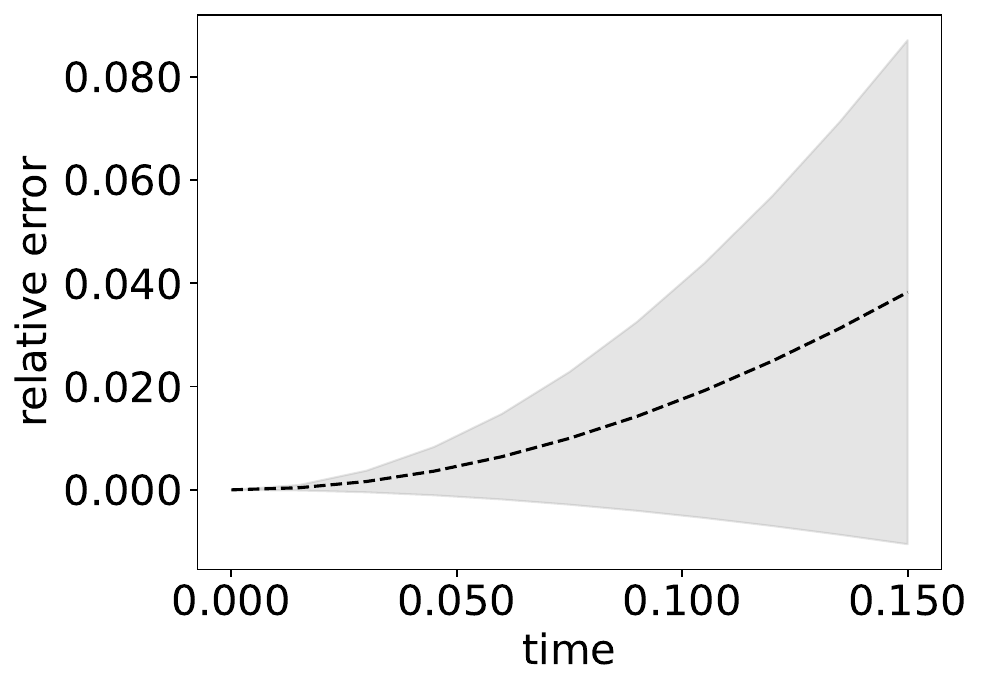}
    \includegraphics[scale=0.45]{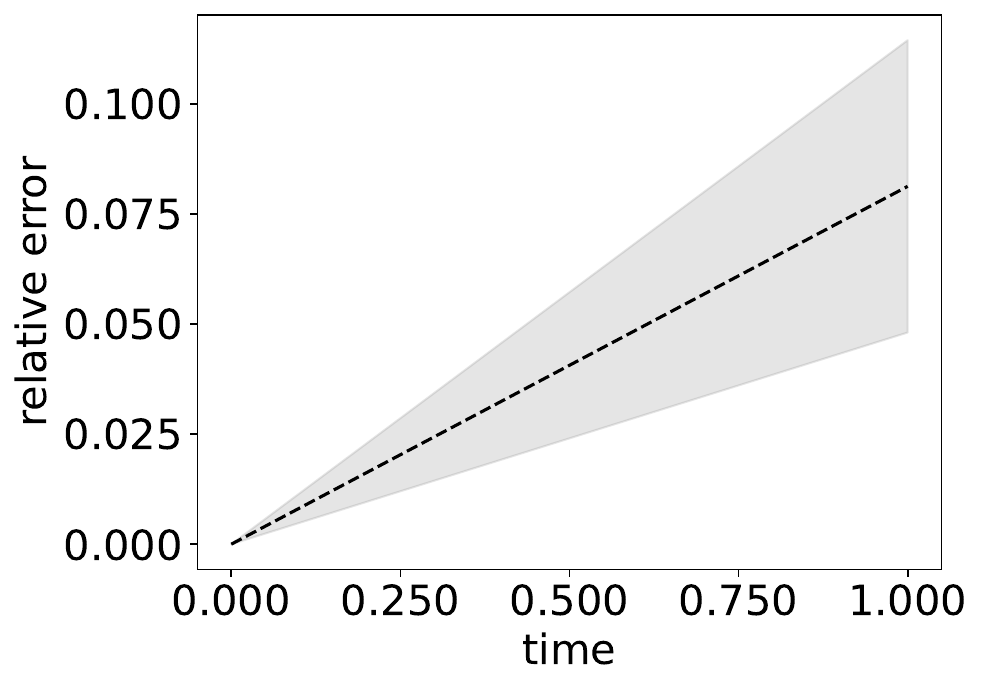}
    \caption{Mean relative error (dotted line) and standard deviation (grey) between the learned $\utt$ and the reference solution $u^*$ for 100 different random initial conditions. (Left) hyperbolic PDE \eqref{eq:hyperbolic}; (Right) Hamiltonian-Jacobi-Bellman equation \eqref{eq:ivp-hjb-eq}. Note that in either case, the tested initials are not included in the training datasets.}
    \label{fig:error-hyper}
\end{figure}

\subsubsection{Applications to Hamilton-Jacobi-Bellman equations}
We consider the following stochastic control problem with control function $\alpha$:
\begin{subequations}
\begin{align}
    \min_{\alpha}\quad &\frac{1}{2}\int_0^{T}|\alpha(X(t),t)|^2 dt+g(X(T)) , \label{eq:sc_obj}\\
    \text{ s.t.}\quad  &dX(t)=\alpha(X(t),t)dt+\sqrt{2\epsilon} dW, \qquad \text{where}\quad X(0)=x, \label{eq:sc_sde}
\end{align}    
\end{subequations}
and the objective function consists of a running cost of $\alpha$ and a terminal cost $g$, and $W$ is the standard Wiener process. 
We set problem dimension to $d=8$, coefficient $\epsilon=0.2$, and terminal time $T=1$.
Here $X(t) \in \Rbb^{d}$ for all $t$.
The corresponding Hamilton-Jacobi-Bellman (HJB) equation of this control problem is a second-order nonlinear PDE of the value function $u$:
\begin{equation}
\partial_t u(x,t) = -\epsilon\Delta u(x,t)+\frac{1}{2}|\nabla u(x,t)|^2, \quad \forall\, x \in \Rbb^d,\ t \in [0,T]
\label{eq:ivp-hjb-eq}
\end{equation}
with terminal cost (value) $u(x,T)=g(x)$.
The optimal control $\alpha(x,t) = -\nabla u(x,t)$ for all $x$ and $t$.
This problem is a classic problem with many real-world applications \cite{fleming1975deterministic}.
Our goal is to find the solution operator that maps any terminal condition $g$ to its corresponding solution. 
%
We note that \eqref{eq:ivp-hjb-eq} can be easily converted to an IVP with initial value $g$ by changing $t$ to $T-t$. We choose
\[
u_{\theta}(x)=\sum_{i=1}^{50} w_i e^{-|a_i \odot (x-b_i)|^2/2},
\] 
where $\odot$ denotes component wise multiplication, and the parameters $\theta \in \Rbb^{m}$ with $m = 850$ is the collection of $(w_i,a_i,b_i) \in \Rbb \times \Rbb^{d} \times \Rbb^{d}$ for $i=1,\dots,50$. We select this neural network as it is similar to those in the set $\Gcal$ below (but they are not necessarily the actual solutions $u^*$) and satisfies $|u_{\theta}(x)|\to 0$ as $x \to \infty$ so long as $a_i \neq 0$. This is important as our solutions will be mainly concentrated in $[-3,3]^d$ due to the solution property of the HJB equation and we want our neural network to reflect this.
We sample $g$ functions by setting them to $u_{\theta(0)}$ for $\theta(0)$ sampled from 
\[
\Theta:=\{\theta :\ w_i \in (-1,0),\ |a_i|_{\infty} \in (0.1,2),\ |b_i|_{\infty} \in [-2,2] \},
\]
and use these to generate trajectories $\theta(t)$ for training. We sampled 250 points $\theta_i(0)'s$ from $\Theta$ uniformly and then used the time marching numerical method as described in \cite{du2021evolutional} to generate a terminal dataset $\Tcal$. As such we use the loss function \eqref{eq:total-loss} in our training. As $\Omega = \Rbb^8$ we use importance sampling to generate the $x$ points for our Monte-Carlo approximations of $\| \cdot \|_2$. Namely, we sample $x$ from the distribution defined by the density $\rho(x; \theta):=\frac{1}{50}\sum_{i=1}^{50} N(x-b_i, \text{diag}(a_i)^{-2})$.

\begin{figure}[h]
    \centering
    \includegraphics[scale=0.65]{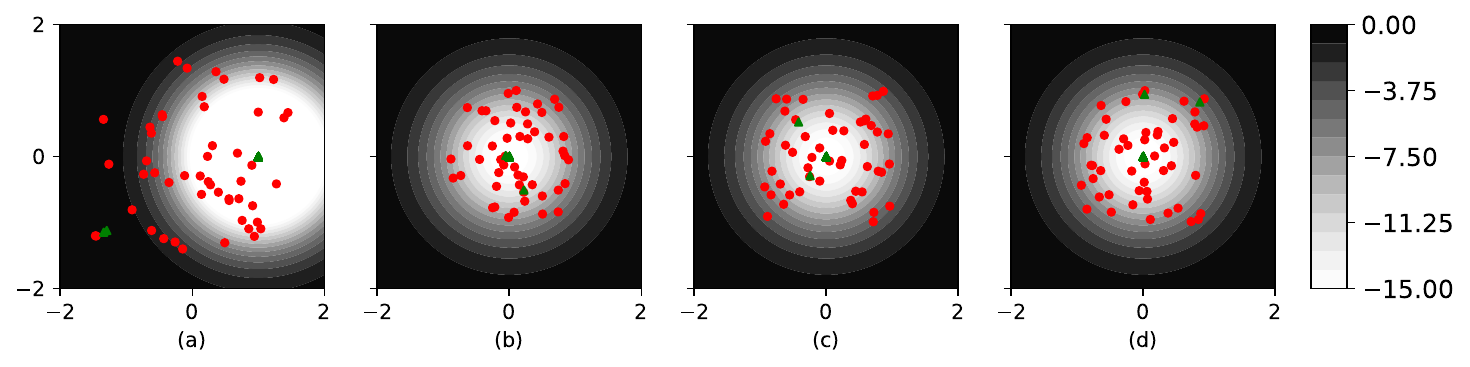}\vspace{-12pt}
     \includegraphics[scale=0.65]{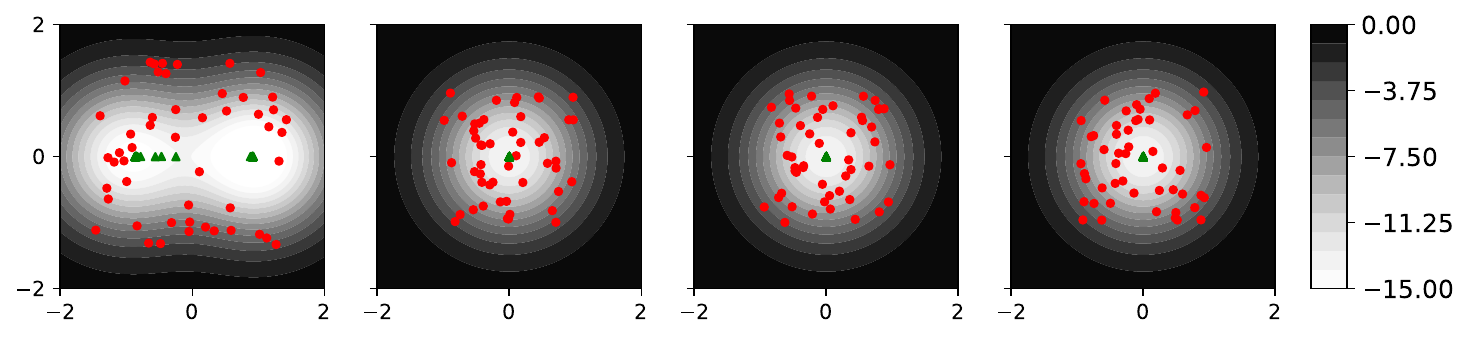}
     \includegraphics[scale=0.65]{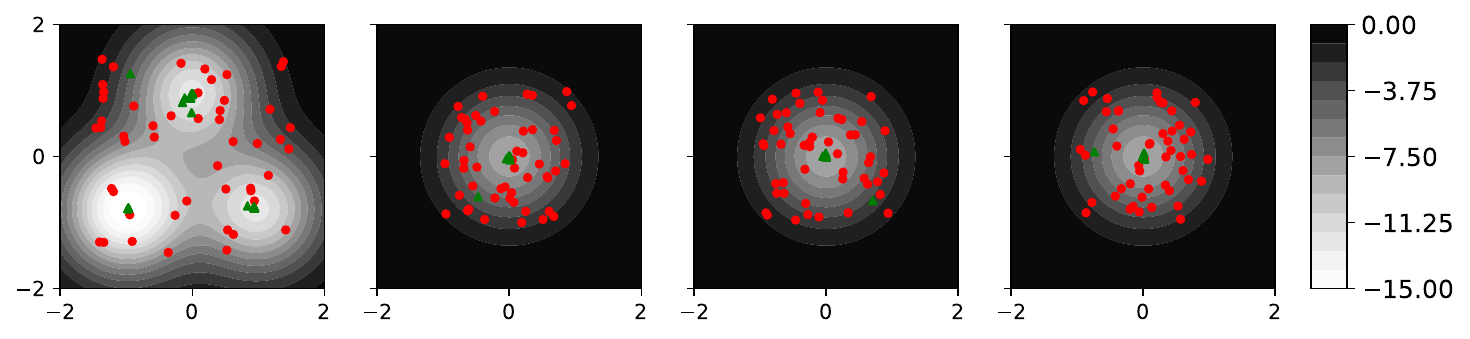}
     \includegraphics[scale=0.65]{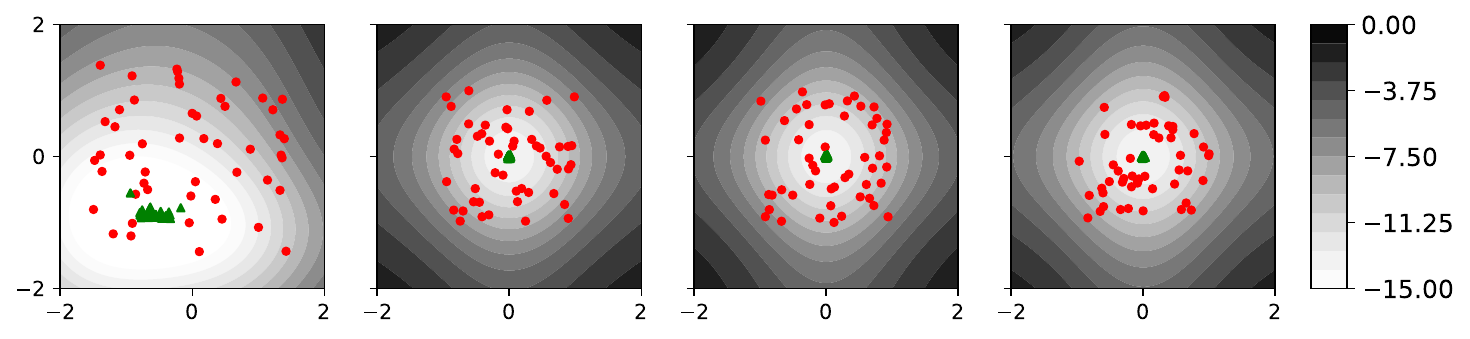}
     \includegraphics[scale=0.65]{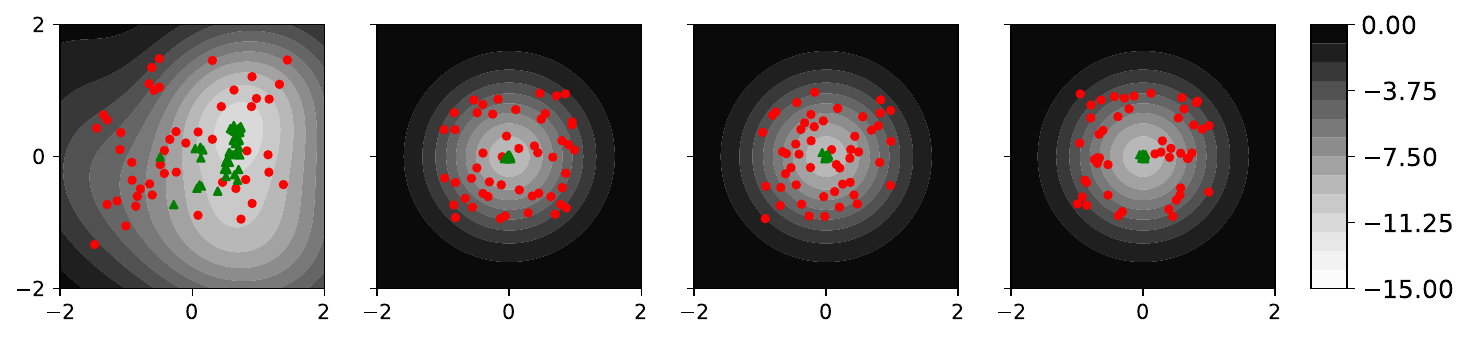}
    \caption{The evolution of 50 sampled points $X(0)$ (red circles) to time $X(1)$ (green triangles) in the (first column) $(x_1,x_2)$ plane; (second column) $(x_3,x_4)$ plane; (third column) $(x_5,x_6)$ plane; and (fourth column) $(x_7,x_8)$ plane. The background images show the expected minimum points of the terminal cost for the five randomly chosen initials. We see the solution $-\nabla u_{\theta(t)}$ provides correct control for all cases. Note that the control may not be able to steer very far initials (those started from the black regions) to the minimum (bright regions) since excessive movement causes large running costs that are not compensated by the terminal gains. As the terminal cost $g$ is scaled larger, this phenomenon becomes less likely to happen, and vice versa.
    }
    \label{fig:hjb-points}
\end{figure}

After we train the $V_{\xi}$, we uniformly sample 100 terminal conditions $g$ from the following set:
\[
\Gcal:=\Big\{\sum_{i=1}^{50} c_i e^{-|x-b_i|^2/\sigma_i^2} \ : \ c_i \in (-1,0),\ \sigma_i^2 \in (0.5,20),\ |b_i|_{\infty} \in [-2,2]\Big\}.
\]
We choose such $\Gcal$ as it is an analogue to the Reproducing Kernel Hilbert Space (RKHS) and can represent a variety of functions by different combinations of functions in practice.
%
For each $g$, we compute the solution using the Cole-Hopf transformation \cite{evans1998partial}:
\[
u^*(x,t)=-2\epsilon\ln{\left((4\epsilon\pi (T-t))^{-d/2}\int_{\Rbb^d}\text{exp}\left(-\frac{|x-y|^2}{4\epsilon (T-t)}-\frac{g(y)}{2\epsilon(T-t)}\right)dy\right)}.
\]
We can approximate these true solutions using Monte-Carlo methods, which is done by using importance sampling to sample 20,000 points in $\Rbb^8$ using the distribution $N(x,2\epsilon(T-t))$. This allows us
to approximate the integral above and compute $u^*$ at each $(x,t)$.
%
%
We graph the average and standard deviation of the relative error of the solution by our approach for these 100 terminal costs in Figure \ref{fig:error-hyper} (right) with reversed time $t \leftarrow T-t$. This plot shows that the average relative error is about $7.5\%$ and the standard deviation is less than $3\%$. 
%
%

%
For demonstration, we show the performance of control $\hat{\alpha}(\cdot,t) := -\nabla \utt(\cdot)$ obtained above. Due to space limitation, we select 5 terminal cost $g$'s such that they vary in $x_1,x_2$ making the minimum locations clear.
%
For each $g$, we sample 50 $X_0$'s uniformly from $[-1.5,1.5]^2\times[-1,1]^6$ and generate their trajectories by solving the stochastic differential equation with $\hat{\alpha}$ in \eqref{eq:sc_sde} using the Euler-Maruyama method with step size 0.001.
The initial $X(0)$'s (red dots) and the terminal locations $X(T)$'s (green triangle) averaged over 1000 runs for these terminals are shown from top to bottom rows in Figure \ref{fig:hjb-points}. The gray-level contours show the corresponding terminal cost $g$ and the locations of $X$ are shown in the $(x_1,x_2)$, $(x_3,x_4)$, $(x_5,x_6)$, and $(x_7,x_8)$ planes in the four columns of Figure \ref{fig:hjb-points}. 
%
%
%
From these plots, one can see properly controlled particle locations by using the solution obtained by the proposed method.

\section{Conclusion and Future Work}
\label{sec:conclusion}

We proposed a novel control-based method to approximate solution operators of evolution PDEs, particularly in the high-dimension. Specifically, we use a reduced-order model (such as a deep neural network) to represent the solutions of a PDE, and learn the control field in the space of model parameters. The trajectories under the learned control render the model with time-evolving parameters to accurately approximate the true solution. The proposed method demonstrate substantially improved solution accuracy and efficiency over all existing works, and it allows users to quickly and accurately approximate the solutions of high-dimensional evolution PDEs with arbitrary initial values by solving controlling ODEs of the parameters. Rigorous approximation estimates are provided for a general class of second-order semilinear PDEs. The unique methodology discussed in this paper provides a unique and exciting direction to approximate solution operators of high-dimensional PDEs.

Despite the promising results obtained in this work, there are still many open problems remain to be solved in this direction. For example, we observed further improvements on parameter efficiency and solution quality when using architectures, particularly $u_{\theta}$, inspired by potential solutions of the PDE. In practice, is it possibly to build such architectures when the solution structure (not the solutions themselves) is only completely or only partially known? Is there a more adaptive approach when the PDE solution may render oscillations or shocks? Could the error analysis be extended to other types of nonlinear PDEs to ensure solution accuracy for a broader range of applications? These problems are very important to address along the direction of the proposed solution methodology.

\appendix

\section{Proof of Theorem \ref{thm:theta_exist}}
\label{appx:A}
We proceed by showing the following Lemma, which provides a meaningful connection between commonly-used general neural networks with fixed parameters and neural networks with time-evolving parameters. For ease of presentation, we restrict to standard feed-forward networks only, while the idea can be readily generalized to other networks  (see Remark \ref{rmk:static-evolving-nn}).
\begin{lemma}
\label{lem:evolution_params}
For any feed-forward neural network architecture $v_{\eta}(x,t)$ with parameters denoted by $\eta$, there exists a feed-forward neural network $\utt (x)$ and a differentiable evolution of the parameters $\theta(t)$ such that $\utt(x) = v_{\eta}(x,t)$ for all $x \in \Rbb^{d}$ and $t \in \Rbb$.
\end{lemma}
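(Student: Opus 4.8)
The plan is to realize a time-dependent feed-forward network $v_\eta(x,t)$ as a time-independent feed-forward network $\utt(x)$ evaluated along a trajectory $\theta(t)$ in parameter space. The key observation is that $v_\eta(x,t)$, viewed as a function of the augmented input $(x,t) \in \Rbb^{d+1}$, is just an ordinary feed-forward network on $d+1$ inputs with fixed weights $\eta$. I would like to \emph{absorb} the $t$-slot of the first layer into the bias structure so that the time-dependence migrates from the input into the parameters, leaving the spatial input $x$ intact.

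\textbf{Construction.} Write the first hidden layer of $v_\eta$ as $\rho(W x + c\,t + b)$, where $W$ collects the weights on the $x$-coordinates, $c$ is the column of weights on the $t$-coordinate, $\rho$ is the activation, and $b$ is the bias; all of $W,c,b$ (and the deeper-layer parameters) are the fixed entries of $\eta$. Now define a new architecture $u_\theta(x)$ with exactly the same shape \emph{except} that it takes only $x \in \Rbb^d$ as input and its first-layer computation is $\rho(\widetilde W x + \widetilde b)$; the remaining layers are copied verbatim. Set the trajectory $\theta(t)$ by letting $\widetilde W(t) := W$ (constant), $\widetilde b(t) := b + c\,t$, and all deeper-layer parameters equal to their fixed values in $\eta$ (also constant). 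Then by construction $u_{\theta(t)}(x) = v_\eta(x,t)$ for every $x$ and $t$, and $\theta(t)$ is affine in $t$, hence differentiable (indeed $\dot\theta(t)$ is the constant vector with $c$ in the first-layer-bias slots and zero elsewhere). One should double-check the edge case where $v_\eta$ has no explicit $t$-input (then take $c=0$) and confirm that $u_\theta$ as defined is still a legitimate feed-forward architecture in the sense used in the paper.

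\textbf{Main obstacle.} The real content is bookkeeping rather than analysis: one must make precise what ``feed-forward neural network architecture'' means here (which parameters are trainable, whether biases are included, whether the input dimension is part of the architecture or can be adjusted) so that the claim ``there exists a feed-forward neural network $\utt(x)$'' is honored — in particular that the network $u_\theta$ we build genuinely falls in the admissible class. If the paper's definition fixes the activation and the layer widths but allows the input dimension and the bias vector to be free, the construction above goes through immediately. The only mild subtlety is ensuring $\theta(t)$ stays differentiable and that the map $t \mapsto \theta(t)$ together with $\theta \mapsto u_\theta$ compose to reproduce $v_\eta(x,t)$ exactly (not merely approximately), which the affine reparametrization achieves. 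I would then close by noting (as flagged in the forthcoming Remark \ref{rmk:static-evolving-nn}) that the same device — peel off the $t$-weights of the first layer into an affine-in-$t$ bias path — applies verbatim to ResNets, convolutional layers, and other architectures, since it only touches the input layer.
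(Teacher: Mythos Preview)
Your proposal is correct and is essentially identical to the paper's own proof: both write the first hidden layer of $v_\eta$ as $\sigma(W_0 x + w_0 t + b_0)$, define $u_\theta$ with the same architecture but input $x$ only, and set $\tilde b_0(t) = w_0 t + b_0$ while keeping all other parameters constant, yielding an affine (hence differentiable) trajectory $\theta(t)$ with $u_{\theta(t)}(x) = v_\eta(x,t)$.
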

\begin{proof}
    We denote the feed-forward network $v_{\eta}(x,t)=w^{\top}_l h_{l-1}(x,t) + b_l$ with
    \[
    h_{\ell}(x,t) = \sigma(W_{\ell}h_{\ell-1}(x,t)+b_{\ell}) , \quad \ell = 1,\ \ldots,\ l-1,
    \] 
    where $h_0(x,t)= \sigma(W_0x+w_0t+b_0)$ and $\eta = (w_l, b_l, W_{l-1},\dots, W_0, w_0, b_0)$. Note that all components in $\eta$ are constants independent of $x$ and $t$. Similarly, denote $u_{\theta}(x)=\tilde{w}_l^{\top}\tilde{h}_{l-1}(x)+ \tilde{b}_l$ with
    \[
    \tilde{h}_{\ell}(x) = \sigma(\tilde{W}_{\ell}\tilde{h}_{\ell-1}(x)+\tilde{b}_{\ell}), \quad \ell = 1,\ \ldots,\ l-1,
    \]
    where $\tilde{h}_0(x)=\sigma(\tilde{W}_0x + \tilde{b}_0)$ and $\theta=(\tilde{w}_l,\tilde{b}_l,\tilde{W}_{l-1},\dots,\tilde{W}_0,\tilde{b}_{0})$. Note that $\theta(t)$ can be time-evolving. Hence, we can set $\tilde{b}_{0}(t) = w_0 t+b_0$ for all $t$ and all other components in $\theta(t)$ as constants identical to the corresponding ones in $\eta$, i.e., $\tilde{w}_l(t)\equiv w_l$, $\tilde{b}_L(t)\equiv b_L$, $\dots$, $\tilde{b}_1(t) \equiv 1_0$. Then it is clear that $\utt(x) = v_{\eta}(x,t)$ for all $x$ and $t$, and $\theta(t)$ is differentiable. 
\end{proof}
\begin{remark}
\label{rmk:static-evolving-nn}
    While we state Lemma \ref{lem:evolution_params} for feed-forward neural networks, it is easy to see that the result holds for general neural networks as well. To see this, note that for any neuron $y(x,t)$ we can rewrite it as $\tilde{y}_{\eta(t)}(x)$ as shown in Lemma \ref{lem:evolution_params}. For any neuron $z(t,x,y(x,t))$ we can write it as $z(t,x,y(x,t))= z(t,x,\tilde{y}_{\eta(t)}(x)) = \tilde{z}_{(\eta(t),\tilde{\eta}(t))}(x)$ for some neural network $\tilde{z}_{\eta, \tilde{\eta}}$ with input $x$ and parameters $(\eta,\tilde{\eta})$ evolving in $t$. Applying this repeatedly justifies the claim.
\end{remark}
We will use the following statement of the universal approximation theorem for neural networks in our later proofs. 
\begin{lemma}[Theorem 5.1 \cite{deryck2021approximation}]
\label{lem:universal}
    Let $\epsilon>0$, $L>0$, and $k\in \mathbb{N}$. There exists a fixed feed-forward neural network architecture $w_{\eta}$ with $\tanh$ activation such that for all $f\in SB(\Omega,L,k+1)$ there exists an $\eta \in \Rbb^m$ such that
    \[
        \|w_{\eta}-f\|_{W^{k,\infty}(\Omega)}<\epsilon.
    \]
\end{lemma}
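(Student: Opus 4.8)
The plan is to run a Grönwall-type energy estimate on the $H^1(\Omega)$-distance between the controlled trajectory $\utt$ and the true solution $u^*$. First I would apply Proposition \ref{prop:F_exists} --- valid because the operator $F$ of Assumption \ref{assump:operator} satisfies Assumption \ref{assump:regularity} --- to obtain a vector field $V_{\xi}$ together with the compact set $\barTheta_{u,F,L}$ on which the PDE residual $\epsilon_t := \nabla_{\theta}\utt\cdot V_{\xi}(\theta(t)) - F[\utt]$ (which equals $\partial_t\utt - F[\utt]$ by the chain rule) obeys $\|\epsilon_t\|_{H^1(\Omega)}\le\varepsilon$ as long as $\theta(t)\in\barTheta_{u,F,L}$. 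Existence of an admissible $\theta(t)$ --- one with $\|u_{\theta(0)} - u^*(\cdot,0)\|_{L^2}\le\varepsilon_0$ and governed by the ODE \eqref{eq:ode} --- is guaranteed for $\varepsilon\le\varepsilon_0$ by Theorem \ref{thm:theta_exist}(ii) applied to the initial fit. Compactness of $\barTheta_{u,F,L}$ and smoothness of the fixed architecture $u_{\theta}$ yield a uniform bound $\|\utt\|_{W^{2,\infty}(\Omega)}\le M_u$ along the trajectory, while $u^*\in SB(\Omega,L,2)$ together with the Lipschitz bounds \eqref{eq:lipschitz_op} make $M_f := \|\partial_u f\|_{L^\infty}$ and $M_f' := \|\nabla\cdot\partial_p f\|_{L^\infty}$ (evaluated along $u^*$) finite; these, with $L_f,L_x,L_u,L_p$ and $M_{\sigma} := \|\sigma\nabla\sigma\|_{L^\infty}$, are the only problem data entering the constant $C$.

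Next I would set $e_0 := \utt - u^*(\cdot,t)$, $e_1 := \nabla e_0$, and $E(t) := \tfrac12\|e_0\|_{L^2(\Omega)}^2 + \tfrac12\|e_1\|_{L^2(\Omega)}^2$, so that $\|\utt - u^*(\cdot,t)\|_{H^1}^2 = 2E(t)$. Differentiating in $t$ and using $\partial_t\utt = F[\utt]+\epsilon_t$ and $\partial_t u^* = F[u^*]$ gives
\[
\dot E(t) = \underbrace{\langle e_0, F[\utt]-F[u^*]\rangle}_{\text{I}} + \underbrace{\langle\nabla e_0,\nabla F[\utt]-\nabla F[u^*]\rangle}_{\text{II}} + \underbrace{\langle e_0,\epsilon_t\rangle + \langle\nabla e_0,\nabla\epsilon_t\rangle}_{\text{III}+\text{IV}}.
\]
The last two terms are bounded by Cauchy--Schwarz and $\|\epsilon_t\|_{H^1}\le\varepsilon$, giving $\text{III}+\text{IV}\le\varepsilon(\|e_0\|_2+\|\nabla e_0\|_2)\le 2\varepsilon\sqrt{E(t)}$. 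For $\text{I}$ and $\text{II}$ I would use the semilinear splitting $F[u]=\tfrac{\sigma^2}{2}\Delta u + \fut$: the pure Laplacian contributions are handled by integration by parts, where the boundary terms vanish since Assumption \ref{assummp:compact} forces $e_0 = 0$ and $\nabla e_0 = 0$ on $\partial\Omega$, the Dirichlet-energy terms $\int\tfrac{\sigma^2}{2}|\nabla e_0|^2$ and $\int\tfrac{\sigma^2}{2}|\Delta e_0|^2$ have favorable signs, and only a benign cross term $M_{\sigma}\|e_0\|_2\|\nabla e_0\|_2$ survives. The nonlinear contributions are expanded by the chain rule, $\nabla\fut = \partial_x f + (\partial_u f)\nabla u_t + \nabla^2 u_t\,\partial_p f$, differenced, and controlled term by term by \eqref{eq:lipschitz_op} together with $M_u$ and the analogous $W^{2,\infty}$-bound on $u^*$. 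Collecting everything yields $\text{I}+\text{II}\le C(\|e_0\|_2^2 + \|e_0\|_2\|\nabla e_0\|_2 + \|\nabla e_0\|_2^2)\le 4CE(t)$ for an explicit $C$ depending only on $F$, $u_{\theta}$, $V_{\xi}$.

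The step I expect to be the main obstacle is the highest-order piece of $\text{II}$, namely $\langle\nabla e_0,\ \nabla^2 u_t\,\partial_p\fut - \nabla^2 v_t\,\partial_p\fvt\rangle$. Splitting it as $\langle\nabla e_0, \nabla^2 u_t(\partial_p\fut - \partial_p\fvt)\rangle + \langle\nabla e_0, \nabla^2 e_0\,\partial_p\fvt\rangle$, the first summand is $\le M_u L_p\|\nabla e_0\|_2(\|e_0\|_2 + \|\nabla e_0\|_2)$, but the second one a priori involves $\nabla^2 e_0$, which is not controlled in $H^1$. The resolution is to note $\nabla^2 e_0\,\nabla e_0 = \tfrac12\nabla|\nabla e_0|^2$ and integrate by parts: $\langle\nabla e_0,\nabla^2 e_0\,\partial_p\fvt\rangle = -\tfrac12\int_\Omega(\nabla\cdot\partial_p\fvt)\,|\nabla e_0|^2\,dx \le \tfrac{d}{2}M_f'\|\nabla e_0\|_2^2$, the boundary term again vanishing by Assumption \ref{assummp:compact} and $\nabla\cdot\partial_p\fvt$ being bounded because $\partial_p f$ is Lipschitz and $u^*\in SB(\Omega,L,2)$. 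With this in hand we get $\dot E\le 4CE + 2\varepsilon\sqrt E$; a routine manipulation (dividing by $\sqrt{E(t)}$) reduces it to $\tfrac{d}{dt}\sqrt{E(t)}\le 4C\sqrt{E(t)}+2\varepsilon$, and Grönwall's inequality gives $\sqrt{E(t)}\le(\sqrt{E(0)}+2\varepsilon t)e^{4Ct}$. Since $\sqrt{E(0)}\le\varepsilon_0$ and $\|\utt - u^*(\cdot,t)\|_{H^1} = \sqrt{2E(t)}$, this is exactly \eqref{eq:utheta-ustar-L2}; and because $u^*(\cdot,0)$ ranges over all of $SB(\Omega,L,4)\cap W^{2k+3,\infty}(\Omega)$, which Theorem \ref{thm:theta_exist}(ii) shows is covered by $\barTheta_{u,F,L}$, the estimate holds uniformly over all admissible initial data, completing the proof.
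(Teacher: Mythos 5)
Your proposal does not prove the statement in question. The statement is Lemma \ref{lem:universal}, a universal approximation result in Sobolev norm: for fixed $\epsilon$, $L$, $k$ there is a \emph{single} feed-forward $\tanh$ architecture $w_{\eta}$ such that every $f \in SB(\Omega,L,k+1)$ is approximated to within $\epsilon$ in $W^{k,\infty}(\Omega)$ by some choice of the parameters $\eta$. What you have written instead is a Gr\"onwall-type energy estimate for $\|u_{\theta(t)}-u^*(\cdot,t)\|_{H^1(\Omega)}$, i.e.\ an argument for Theorem \ref{thm:ode-sol-approx-error}; nothing in your text addresses the existence of an approximating network in the $W^{k,\infty}$ norm, the uniformity of the architecture over the Sobolev ball, or the role of the $\tanh$ activation.

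For the record, the paper's proof of Lemma \ref{lem:universal} is essentially a citation: Theorem 5.1 of \cite{deryck2021approximation} gives exactly this statement on the cube $[0,1]^d$, and the lemma follows by composing with an affine transformation layer to map a general bounded $\Omega$ into the cube (which changes Sobolev norms only by a constant factor absorbed into the choice of $\epsilon$ and $L$). A correct answer here would either reproduce that reduction or sketch a direct $W^{k,\infty}$ approximation argument for $\tanh$ networks with a parameter count independent of the particular $f$ in the ball; the energy estimate you propose belongs to a different theorem and cannot be repaired into a proof of this one.
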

\begin{proof}
    While Theorem 5.1 in \cite{deryck2021approximation} is stated for $\Omega=[0,1]^d$ we can apply a transformation layer to generalize this for more arbitrary bounded $\Omega$. Thus Lemma \ref{lem:universal} is an immediate corollary to Theorem 5.1 in \cite{deryck2021approximation}.
\end{proof}
We now advance to the main proof of this subsection.
\begin{proof}[Proof of Theroem \ref{thm:theta_exist}]
    Let $\delta>0$ and define $\Omega_{\delta}=\Omega \times [-\delta, \delta]$. Denote
    \[
    \bar{L}:= (1 + (1+\delta) M_{k,L,F}) L + (1+\delta) \|F[0]\|_{W^{k+1,\infty}(\Omega)}.
    \]
    For any $g \in W^{2k+3,\infty}(\Omega)$ with $\|g\|_{W^{2k+2,\infty}(\Omega)} \le L$, we define $T_g$ such that $T_g(x,t) = g(x) + t F[g](x)$ for all $(x,t) \in \Omega_{\delta}$. Note that $T_g(\cdot,0) \equiv g(\cdot)$ and $\partial_t T_g(\cdot,t)=F[g](\cdot)$ for all $t$. Furthhermore, there is
    \begin{align}
        \|T_g\|_{W^{k+2,\infty}(\Omega_{\delta})}
        & = \|g + t F[g] \|_{W^{k+2,\infty}(\Omega_{\delta})} \nonumber \\
        & \le \|g\|_{W^{k+2,\infty}(\Omega_{\delta})} + \|F[g]\|_{W^{k+1,\infty}(\Omega)} + \delta \|F[g]\|_{W^{k+2,\infty}(\Omega)} \label{eq:Tg_decompose} \\
        & \le \|g\|_{W^{k+2,\infty}(\Omega_{\delta})} + (1+\delta)\|F[g]\|_{W^{k+2,\infty}(\Omega)}\nonumber \\
        & =: \text{I} + (1+\delta)\cdot \text{II}, \nonumber
    \end{align}
    where the first equality in \eqref{eq:Tg_decompose} is due to the definition of $T_g$, the first inequality due to the triangle inequality in $W^{k+2,\infty}(\Omega_{\delta})$, the split of time and spatial derivatives on $T_g$, and $|t|\le \delta$, and the second inequality due to $\|F[g]\|_{W^{k+1,\infty}(\Omega_{\delta})} \le \|F[g]\|_{W^{k+2,\infty}(\Omega_{\delta})}$. Since $g \in C^{2k+2}(\Omega)$, we have
    \begin{equation}
    \label{eq:lemma_g_bound}
        \text{I}:= \|g\|_{W^{k+2,\infty}(\Omega_{\delta})} \le \|g\|_{W^{2k+3,\infty}(\Omega)} \le L
    \end{equation}
    given that $g$ is constant in $t$.
    On the other hand, there is
    \begin{align}
        \text{II}
        := \|F[g]\|_{W^{k+2,\infty}(\Omega)} 
        & \le \|F[g] - F[0]\|_{W^{k+2,\infty}(\Omega)} + \|F[0]\|_{W^{k+2,\infty}(\Omega)} \nonumber\\
        & \le M_{k,L,F} \|g\|_{W^{2k+2,\infty}(\Omega)} + \|F[0]\|_{W^{k+2,\infty}(\Omega)} \label{eq:lemma_Fg_bound}  \\
        & \le M_{k,L,F} L + \|F[0]\|_{W^{k+2,\infty}(\Omega)} \nonumber
    \end{align}
    where the second inequality is due to the Assumption \ref{assump:regularity} on $F$.
    Combining \eqref{eq:Tg_decompose}, \eqref{eq:lemma_g_bound} and \eqref{eq:lemma_Fg_bound} yields $\|T_g\|_{W^{k+2,\infty}(\Omega_{\delta})} \le \bar{L}$.
    By Lemma \ref{lem:universal}, there exists a feedforward neural network architecture $v_{\eta}$ such that for any $T_g$ there are parameters $\eta = \eta_g$ for some $\eta_g$ satisfying
    \begin{equation}
    \label{eq:v_approx_Tg}
        \| v_{\eta_g} - T_g \|_{W^{k+1,\infty}(\Omega_{\delta})} < \frac{\epsilon}{2(1+M_{k,L,F})}.
    \end{equation}

    Define $w_g := v_{\eta_g} - T_g$, then $w_g \in W^{k+2,\infty}(\Omega_{\delta})$ since $v_{\eta_g}$ is smooth in $(x,t)$ and $T_g(\cdot,t)=g(\cdot) + t F[g](\cdot) \in W^{k+2,\infty}(\Omega_{\delta})$. As a result, for any multi-index $\alpha \in \Nbb^{d+1}$ satisfying $|\alpha| \le k+1 $, there is $\partial^{\alpha}w_g \in W^{1,\infty}(\Omega_{\delta})$, which also implies $\partial^{\alpha} w_g \in C(\Omega_{\delta})$. Due to this continuity, we also have
    \begin{equation}
        \| \partial^{\alpha} w_g(\cdot, 0)\|_{L^{\infty}(\Omega)} \le \| \partial^{\alpha} w_g \|_{L^{\infty}(\Omega_{\delta})}.
    \end{equation}
    Since $\alpha$ is arbitrary, we thus have
    \begin{align}
        \| w_g(\cdot, 0)\|_{W^{p,\infty}(\Omega)} & \le \| w_g \|_{W^{p,\infty}(\Omega_{\delta})} \label{eq:w0_bound} \\ 
        \| \partial_t w_g(\cdot, 0)\|_{W^{p-1,\infty}(\Omega)} & \le \| \partial_t w_g \|_{W^{p-1,\infty}(\Omega_{\delta})} \label{eq:dtw0_bound}
    \end{align}
    for all $p=1,\dots, k+1$. Therefore,we have 
    \begin{align}
         \|v_{\eta_g}(\cdot,0)-g(\cdot)\|_{W^{k+1,\infty}(\Omega)}
         & = \|w_{g}(\cdot,0)\|_{W^{k+1,\infty}(\Omega)} 
         \le \|w_{g}\|_{W^{k+1,\infty}(\Omega_{\delta})} \label{eq:wg_bound} \\
         & = \|v_{\eta_g}-T_g\|_{W^{k+1,\infty}(\Omega_{\delta})}<\frac{\epsilon}{2(1+M_{k,L,F})}, \nonumber
    \end{align}
    where the two equalities are due to the definition of $w_g$, the first inequality due to  \eqref{eq:w0_bound} with $p=k+1$, and last inequality due to \eqref{eq:v_approx_Tg}, 
    Note that \eqref{eq:wg_bound} further implies that
    \begin{equation}
    \label{eq:F_vg_g_bound}
        \|F[v_{\eta_g}](\cdot,0) - F[g](\cdot)\|_{W^{1,\infty}(\Omega)} \le M_{k,L,F} \| v_{\eta_g}(\cdot,0) - g(\cdot) \|_{W^{k+1,\infty}(\Omega)} < \frac{\epsilon}{2}.
    \end{equation}
    Combining the results above, we have
    \begin{align}
    \|\partial_t v_{\eta_g}(\cdot,0)-F[v_{\eta_g}](\cdot,0)\|_{W^{1,\infty}(\Omega)}& \leq \|\partial_t v_{\eta_g}(\cdot,0)-F[g](\cdot)\|_{W^{1,\infty}(\Omega)} +\|F[g](\cdot)-F[v_{\eta_g}](\cdot,0)\|_{W^{1,\infty}(\Omega)} \nonumber \\
    & < \|\partial_t w_g(\cdot,0)\|_{W^{1,\infty}(\Omega)} +\frac{\epsilon}{2}  \nonumber\\
    & \le \|\partial_t w_g\|_{W^{1,\infty}(\Omega_{\delta})} +\frac{\epsilon}{2}  \label{eq:projection_proof}\\
    & \le \|v_{\eta_g}-T_g\|_{W^{k+1,\infty}(\Omega_{\delta})} +\frac{\epsilon}{2}  \nonumber\\
    &\leq \epsilon. \nonumber
    \end{align}
    where the first inequality above is due to the triangle inequality, the second due to the definition of $w_{g}$, the property $\partial_t T_g(\cdot,0) = F[g](\cdot)$, and \eqref{eq:F_vg_g_bound}, the third due to \eqref{eq:dtw0_bound} with $p=2$, the fourth due to $\|v_{\eta_g}-T_g\|_{W^{k+1,\infty}(\Omega_{\delta})}$ being the max of terms including $\|\partial_t(v_{\eta_g}-T_g)\|_{W^{1,\infty}(\Omega_{\delta})}$, and the last inequality due to \eqref{eq:wg_bound}.
    Since $v_{\eta}$ uses tanh activations and hence is smooth in $\eta$, we know both \eqref{eq:wg_bound} and \eqref{eq:projection_proof} hold for some open neighborhood $U_g$ of $\eta_{g}$ in $\Rbb^{m}$.

    Following the ideas in the proof of Lemma \ref{lem:evolution_params} we define the sets $\hat{U}_g$ where for each $\eta=(w_l,\dots,w_0,b_0) \in U_g$ we have $\theta=(w_l,\dots,b_0)\in \hat{U}_g$, since $U_g$ is open we can easily conclude that $\hat{U}_g$ is also open. 

    Now we set $\Gamma:=\bigcup_{g \in W^{2k+3,\infty}(\Omega) \cap SB(\Omega,L,2k+2)}\hat{U}_g$, which is open, and $\Theta_{u,L,F} = \Gamma \cap B_R(0)$ where $B_{R}(0):=\{\theta \in \Rbb^{m}:\ |\theta| < R\}$ is a bounded open set for some $R>0$. Therefore $\Theta_{u,L,F}$ is also bounded and open. Moreover by Lemma \ref{lem:evolution_params}, for any $\theta \in \Theta_{u,L,F}$, there exists $\eta \in U_g$ for some $g$ and a differentiable $\hat{\theta}(t)$, such that $u_{\hat{\theta}(t)}(\cdot) = v_{\eta}(\cdot,t)$ for all $t$ with $\hat{\theta}(0)=\theta$. Therefore,
    \[
    \partial_t v_{\eta}(\cdot,0) = \partial_t u_{\hat{\theta}(t)}(\cdot)|_{t=0} = \nabla_{\theta} u_{\hat{\theta}(0)}(\cdot) \cdot \dot{\hat{\theta}}(0) = \nabla_{\theta} u_{\theta}(\cdot) \cdot \dot{\hat{\theta}}(0).
    \]
    Let $\alpha_{\theta} = \dot{\hat{\theta}}(0)$. Given that \eqref{eq:projection_proof} holds for $v_{\eta}$ and $u_{\theta}(\cdot) = v_{\eta}(\cdot,0)$, we know that
    \[
    \| \nabla_{\theta} u_{\theta} \cdot \alpha_{\theta} - F[u_{\theta}] \|_{W^{1,\infty}(\Omega)}
    = \|\partial_t v_{\eta}(\cdot,0) - F[v_{\eta}](\cdot,0)\|_{W^{k+1,\infty}(\Omega_{\delta})} < \epsilon,
    \]
    which justifies the first claim. 

    (ii) For any $g \in W^{2k+2,\infty}(\Omega) \cap SB(\Omega,L,2k+1)$, we know there exists $\eta_g \in U_g$, such that $v_{\eta_g}(\cdot,0) = g(\cdot)$. Given the definition of $\Theta_{u,F,L}$ above, there exists $\theta \in \Theta_{u,F,L}$ and a differentiable curve $\hat{\theta}(t)$ such that $\hat{\theta}(0)=\theta$ and $u_{\hat{\theta}(t)}(\cdot) = v_{\eta_g}(\cdot,t)$ for all $t$. Therefore, given that \eqref{eq:wg_bound} holds for $v_{\eta_g}$, we have
    \begin{equation*}
        \| u_{\theta} - g \|_{W^{1,\infty}(\Omega)} = \| u_{\hat{\theta}(0)} - g \|_{W^{1,\infty}(\Omega)} = \|  v_{\eta_g}(\cdot,0) - g(\cdot) \|_{W^{1,\infty}(\Omega)} < \epsilon,
    \end{equation*}
    which proves the second claim.
    \end{proof}

\section{Proof of Proposition \ref{prop:F_exists}}
\label{appx:B}
First we sight the following corollary to Theorem \ref{thm:theta_exist} which follows Lemma 3.3 in \cite{gaby2023neural}.
\begin{lemma}
\label{lem:bounded}
Suppose Assumption \ref{assump:regularity} is satisfied. For all $\varepsilon>0$ there exists $v: \bar{\Theta}_{u,F,L} \to \Rbb^m$ such that $v$ is bounded over $\bar{\Theta}_{u,F,L}$ and the value of $v$ at $\theta$, denoted by $v_{\theta}$, satisfies
\[
\|v_{\theta} \cdot \nabla_{\theta}\ut-F[\ut]\|_{H^2}\leq \varepsilon, \qquad \forall\, \theta \in \barTheta.
\]
\end{lemma}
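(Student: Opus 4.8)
The plan is to combine the pointwise existence of admissible update directions supplied by Theorem~\ref{thm:theta_exist} with a compactness argument over the parameter set to extract a \emph{uniform} bound on such directions. Since the lemma asks only for a bounded map $v$ (not a continuous or measurable one), no selection theorem is needed: a bare choice of admissible direction at each point, subject to a common magnitude bound, suffices.

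First I would apply Theorem~\ref{thm:theta_exist}. Its conclusion is stated in a norm of the form $W^{p,\infty}(\Omega)$, and since $\Omega$ is bounded one has $\|\cdot\|_{H^{2}(\Omega)}\le C_{\Omega}\|\cdot\|_{W^{2,\infty}(\Omega)}$; hence, taking the regularity index in Theorem~\ref{thm:theta_exist} to be at least $2$ (allowed whenever Assumption~\ref{assump:regularity} holds with $p\ge 2$) and its tolerance to be $\varepsilon/(2C_{\Omega})$, we obtain a feed-forward network $\ut$ and a bounded open set such that for every parameter $\theta$ in it there exists $\alpha_{\theta}\in\Rbb^{m}$ with $\|\alpha_{\theta}\cdot\nabla_{\theta}\ut-F[\ut]\|_{H^{2}(\Omega)}<\varepsilon/2$. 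As seen in the proof of Theorem~\ref{thm:theta_exist}, this open region can be taken strictly larger than $\Theta_{u,F,L}$, so we may assume it contains the compact set $\bar{\Theta}_{u,F,L}$; in particular admissible directions $\alpha_{\theta}$ exist for every $\theta\in\bar{\Theta}_{u,F,L}$.

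Next, for $B>0$ set
\[
\Theta_{B}:=\Big\{\theta\in\bar{\Theta}_{u,F,L}\ :\ \exists\,\alpha\in\Rbb^{m},\ |\alpha|\le B,\ \|\alpha\cdot\nabla_{\theta}\ut-F[\ut]\|_{H^{2}(\Omega)}<\varepsilon\Big\},
\]
so that $\Theta_{B}=\bigcup_{|\alpha|\le B}\{\theta:\ \|\alpha\cdot\nabla_{\theta}\ut-F[\ut]\|_{H^{2}(\Omega)}<\varepsilon\}$. Because $\ut$ depends continuously on $\theta$ together with its spatial derivatives up to order two, the map $\theta\mapsto\nabla_{\theta}\ut$ is continuous into $H^{2}(\Omega)^{m}$, and Assumption~\ref{assump:regularity} makes $\theta\mapsto F[\ut]$ continuous into $H^{2}(\Omega)$; hence each map $\theta\mapsto\|\alpha\cdot\nabla_{\theta}\ut-F[\ut]\|_{H^{2}(\Omega)}$ is continuous, each set in the union is relatively open in $\bar{\Theta}_{u,F,L}$, and so is $\Theta_{B}$. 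The family $\{\Theta_{B}\}_{B>0}$ is increasing and, by the previous step (take $B=|\alpha_{\theta}|$ and use $\varepsilon/2<\varepsilon$), it covers the compact set $\bar{\Theta}_{u,F,L}$; by compactness together with monotonicity there is a single $B_{0}>0$ with $\bar{\Theta}_{u,F,L}\subseteq\Theta_{B_{0}}$. I then define $v:\bar{\Theta}_{u,F,L}\to\Rbb^{m}$ by letting $v_{\theta}$ be any $\alpha$ --- say the minimum-norm one, to make the choice canonical --- with $|\alpha|\le B_{0}$ and $\|\alpha\cdot\nabla_{\theta}\ut-F[\ut]\|_{H^{2}(\Omega)}\le\varepsilon$; such $\alpha$ exists since $\theta\in\Theta_{B_{0}}$, the map $v$ is bounded by $B_{0}$, and it satisfies the desired inequality, completing the proof.

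The main obstacle is the openness of the covering sets $\Theta_{B}$, which rests on $\theta\mapsto F[\ut]$ being continuous in the $H^{2}(\Omega)$ topology --- exactly what Assumption~\ref{assump:regularity} provides --- and on $\nabla_{\theta}\ut$ depending continuously on $\theta$ as an element of $H^{2}(\Omega)^{m}$, a fact to be read off from the (smooth-activation) network architecture. A secondary technicality is that Theorem~\ref{thm:theta_exist} produces admissible directions only on an open set, which one must arrange to contain the closure $\bar{\Theta}_{u,F,L}$. The remaining ingredients --- the Sobolev embedding, the bookkeeping of tolerances, and checking the final estimate --- are routine. The continuity and differentiability of the vector field needed in Proposition~\ref{prop:F_exists} are \emph{not} established here; they are obtained afterward by mollifying this bounded $v$.
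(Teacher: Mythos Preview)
Your argument is correct. The paper does not actually prove this lemma: it declares the result ``immediate'' by combining Theorem~\ref{thm:theta_exist} with Lemma~3.3 of \cite{gaby2023neural} and omits all details. Your compactness argument --- covering the compact set $\bar{\Theta}_{u,F,L}$ by the increasing open family $\{\Theta_{B}\}_{B>0}$ and extracting a single bound $B_{0}$ --- is almost certainly the content of that externally cited lemma, so the two approaches coincide in spirit; yours simply has the advantage of being self-contained. You are also right to flag the gap between the $W^{1,\infty}$ estimate that Theorem~\ref{thm:theta_exist} literally states and the $H^{2}$ estimate the lemma demands: the paper does not address this, and your patch (rerunning the proof of Theorem~\ref{thm:theta_exist} at a higher regularity index, which Assumption~\ref{assump:regularity} with $p\ge 2$ permits) is the natural fix.
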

Lemma \ref{lem:bounded} is an immediate result by combining Theorem \ref{thm:theta_exist} above and Lemma 3.3 of \cite{gaby2023neural}. Hence we omit the proof here.
With this lemma in hand, we can prove our main result of this section.
\begin{proof}[Proof of Proposition \ref{prop:F_exists}]
    In what follows, we will follow the proof of Proposition 3.4 in \cite{gaby2023neural} to extend that result from the $L^2$ to the $H^1$ norm.
    We first show that there exists a differentiable vector-valued function $V: \bar{\Theta}_{u,F,L} \to \Rbb^{d}$ such that 
\begin{equation}
\label{eq:V_bound}
    \|V(\theta) \cdot \nabla_{\theta}\ut-F[\ut]\|_{H^2} \leq \frac{\varepsilon}{2}
\end{equation}
for all $\theta \in \bar{\Theta}_{u,F,L}$. After which the claim about neural networks follows immediately from the proof of Proposition 3.4 in \cite{gaby2023neural}.
%
To this end, we choose $\bar{\varepsilon}_0 \in (0, \varepsilon/2)$ and $\bar{\varepsilon} \in (\bar{\varepsilon}_0, \varepsilon/2) $, then by Theroem \ref{thm:theta_exist} and Lemma \ref{lem:bounded} we know that there exists a neural network $\ut$, a bounded open set $\bar{\Theta}_{u,F,L} \subset \Rbb^{m}$, and $M_v>0$ such that there is a vector-valued function $\theta \mapsto v_{\theta}$, where for any $\theta \in \bar{\Theta}_{u,F,L}$, we have $|v_{\theta}|<M_v$ and
\[
\|v_{\theta} \cdot \nabla_{\theta}\ut-F[\ut]\|_{H^2} \leq \barvarepsilon.
\]
Note that $v_{\theta}$ is not necessarily differentiable with respect to $\theta$.
To obtain a differentiable vector field $V(\theta)$, for each $\theta \in \bar{\Theta}_{u,F,L}$, we define the function $\psi_{\theta}$ by
\begin{equation*}
    \psi_{\theta}(w) : = \| w \cdot \nabla_{\theta} \ut - F[\ut] \|_{H^2}^2 = w^{\top} G(\theta) w - 2 w^{\top} p(\theta) + q(\theta),
\end{equation*}
where 
\begin{equation}
\label{eq:def-G}
\begin{aligned}
    G(\theta)&=\sum_{i=1}^{d} \int_{\Omega}\nabla_{\theta} \partial_{x_i} u_{\theta}(x)\nabla_{\theta} \partial_{x_i} u_{\theta}(x)^{\top}dx+\int_{\Omega}\nabla_{\theta} u_{\theta}(x)\nabla_{\theta} u_{\theta}(x)^{\top}dx\\
    p(\theta)&=\sum_{i=1}^{d} \int_{\Omega}\nabla_{\theta} \partial_{x_i} u_{\theta}(x)F[\ut](x)dx+\int_{\Omega}\nabla_{\theta} u_{\theta}(x)\nabla_{\theta} F[\ut](x)dx\\
    q(\theta)&=\int_{\Omega} F[\ut](x)dx,
    \end{aligned}
\end{equation}
we are using the convention that $\nabla_{\theta}u_{\theta} \in \Rbb^d$ is a column vector.
%
Then we know 
\begin{equation}
\label{eq:psi_theta_star}
    \psi_{\theta}^{*} : = \psi_{\theta}(v_{\theta}) = \| v_{\theta} \cdot \nabla_{\theta} \ut - F[\ut] \|_{H^2}^2 \le \barvarepsilon^2.
\end{equation}
It is also clear that $G(\theta)$ is symmetric and positive semi-definite. Moreover, due to the compactness of $\bar{\Omega}$ and $\bar{\Theta}_{u,F,L}$, as well as that $\nabla_{\theta} u \in C(\barOmega \times \barTheta)$, we know there exists $\lambda_{G} > 0$ such that 
\begin{equation*}
    \| G(\theta) \|_2 \le \lambda_{G}
\end{equation*}
for all $\theta \in \bar{\Theta}_{u,F,L}$ with respect to the spectral norm.
Therefore, $\psi_{\theta}$ is a convex function and the Lipschitz constant of $\nabla \psi_{\theta}$ is uniformly upper bounded by $\lambda_{G}$ over $\bar{\Theta}_{u,F,L}$.
Now for any $w \in \Rbb^{m}$, $h>0$, and $K \in \Nbb$ (we reuse the letter $K$ as the iteration counter instead of the number of sampling points in this proof), we define
\begin{equation*}
    \Ocal_{\theta}^{K,h}(w) := w_{K}, \quad \mbox{where} \quad w_k = w_{k-1} - h \nabla \psi_{\theta}(w_{k-1}), \quad w_0 = w, \quad k = 1,\dots,K.
\end{equation*}
Namely, $\Ocal_{\theta}^{K,h}$ is the oracle of executing the gradient descent optimization scheme on $\psi_{\theta}$ with step size $h>0$ for $K$ iterations.

Now we note that $\psi_{\theta}$ is convex, differentiable, and $\nabla \psi_{\theta}$ is Lipschitz continuous with Lipschitz constant upper bounded by $\lambda_{G}$. With these in mind, we can directly use the process from \cite{gaby2023neural} to arrive at the inequality
\begin{equation}
\label{eq:psi_opt}
    \psi_{\theta}(\Ocal_{\theta}^{K,h}(0)) - \psi_{\theta}^{*}  \le  \Big( \frac{\varepsilon}{2} \Big)^2 - \bar{\varepsilon}^2.
\end{equation}
Here we have set $K$ to be some fixed number such that
\[
K \ge \frac{ M_v^2}{2h((\varepsilon/2)^2 - \bar{\varepsilon}^2)}.
\]

Notice that $\Ocal_{\theta}^{K,h}$ is a differentiable vector-valued function of $\theta$ because $K$ and $h$ are fixed.
Therefore, combining \eqref{eq:psi_theta_star} and \eqref{eq:psi_opt} yields
\begin{equation*}
    0\leq \psi_{\theta}(\Ocal_{\theta}^{K,h}(0)) = (\psi_{\theta}(\Ocal_{\theta}^{K,h}(0)) - \psi_{\theta}^{*}) + \psi_{\theta}^{*} \le (\varepsilon/2)^2 -\barvarepsilon^2 + \barvarepsilon^2 = (\varepsilon/2)^2.
\end{equation*}
As this inequality holds $\forall \theta \in \barTheta$, we set $V(\theta) = \Ocal_{\theta}^{K,h}(0)$ which is a differentiable function of $\theta$ satisfying \eqref{eq:V_bound}.
This completes the proof.
\end{proof}

\bibliographystyle{abbrv}
\bibliography{library}

\end{document}